\documentclass[11pt]{article}
\setlength{\topmargin}{-.6in}
\setlength{\oddsidemargin}{-0cm}
\setlength{\evensidemargin}{-1cm}
\setlength{\textwidth}{16.5cm}
\setlength{\textheight}{23cm}

\usepackage{latexsym,amssymb,amsmath,amsthm,graphics,graphicx,float,psfrag, epsfig, color, setspace}

\newcommand{\R}{\mathbb{R}}
\newcommand{\C}{\mathbb{C}}

\newcommand{\PP}{\mathbb{P}}
\newcommand{\<}{\langle}
\renewcommand{\>}{\rangle}

\newcommand{\eps}{\varepsilon}
\newcommand{\E}{\mathbb{E}}
\newcommand{\Eic}{E_i^c}
\newcommand{\I}{\mathbf{I}}
\newcommand{\Y}{\mathbf{Y}}
\newcommand{\Ybar}{\bar{\mathbf{Y}}}
\newcommand{\HTp}{\mathbf{H}_{T^\perp}}
\newcommand{\ITp}{\mathbf{I}_{T^\perp}}
\newcommand{\YTp}{\mathbf{Y}_{T^\perp}}
\newcommand{\YT}{\mathbf{Y}_{T}}
\newcommand{\YbarT}{\Ybar_{T}}
\newcommand{\YbarTp}{\Ybar_{T^\perp}}
\newcommand{\XTp}{\mathbf{X}_{T^\perp}}
\newcommand{\HT}{\mathbf{H}_{T}}
\newcommand{\XT}{\mathbf{X}_{T}}
\newcommand{\A}{\mathcal{A}}
\newcommand{\X}{\mathbf{X}}

\newcommand{\Xzerobar}{\bar{\X}^{(0)}}

\newcommand{\W}{\mathbf{W}}
\newcommand{\Wzero}{\W^{(0)}}
\newcommand{\Wone}{\W^{(1)}}
\newcommand{\Wtwo}{\W^{(2)}}
\newcommand{\Wzerobar}{\bar{\W}^{(0)}}
\newcommand{\Wonebar}{\bar{\W}^{(1)}}
\newcommand{\Wtwobar}{\bar{\W}^{(2)}}
\newcommand{\Wk}{\W^{(k)}}
\newcommand{\Wkbar}{\bar{\W}^{(k)}}

\newcommand{\bfH}{\mathbf{H}}

\newcommand{\matrixgeq}{\succeq}

\newcommand{\matrixgt}{\succ}

\newcommand{\bfb}{\mathbf{b}}
\newcommand{\bfz}{\mathbf{z}}
\newcommand{\bfu}{\mathbf{u}}
\newcommand{\zz}{\bfz \bfz^*}
\newcommand{\bfx}{\mathbf{x}}
\newcommand{\bfy}{\mathbf{y}}
\newcommand{\ybar}{\bar{\mathbf{y}}}
\newcommand{\bfnu}{\mathbf{\nu}}
\newcommand{\bfe}{\mathbf{e}}
\newcommand{\Sinv}{\mathcal{S}^{-1}}
\newcommand{\Scal}{\mathcal{S}}
\newcommand{\ee}{\bfe_1 \bfe_1^*}
\newcommand{\PTp}{\mathcal{P}_{T^\perp}}
\newcommand{\tr}{\text{Tr}}
\newcommand{\bfv}{\mathbf{v}}
\newcommand{\vv}{\bfv_1 \bfv_1^*}
\newcommand{\oneE}{\mathbf{1}_E}
\newcommand{\Paxb}{ \mathcal{P}_{\A (\X) = \bfb}}
\newcommand{\Ppsd}{ \mathcal{P}_\text{psd}}
\newcommand{\xxstar}{\bfx \bfx^*}
\newcommand{\xnotxnotstar}{\bfx_0 \bfx_0^*}
\newcommand{\zizi}{ \bfz_i \bfz_i^*}
\newcommand{\indpos}{\iota_{\X \matrixgeq 0}}

\newcommand{\indaxb}{\iota_{\A(\X)= \bfb}}

\newtheorem{theorem}{Theorem}
\newtheorem{lemma}{Lemma}

\newtheorem{corollary}[theorem]{Corollary}

\title{Stable optimizationless recovery from phaseless linear measurements}
\author{Laurent Demanet and Paul Hand \\ $\;$ \\ Massachusetts Institute of Technology, Department of Mathematics, \\ 77 Massachusetts Avenue, Cambridge, MA 02139}
\date{August 2012, Revised October 2013}		


\begin{document}
\maketitle

\begin{abstract}
We address the problem of recovering an $n$-vector from $m$ linear measurements lacking sign or phase information. We show that lifting and semidefinite relaxation suffice by themselves for stable recovery in the setting of $m = O(n \log n)$ random sensing vectors, with high probability. The recovery method is optimizationless in the sense that trace minimization in the PhaseLift procedure is unnecessary.  That is, PhaseLift reduces to a feasibility problem.  The optimizationless perspective allows for a Douglas-Rachford numerical algorithm that is unavailable for PhaseLift.  This method exhibits linear convergence with a favorable convergence rate and without any parameter tuning. 
\end{abstract}

{\bf Acknowledgements.} The authors acknowledge generous funding from the National Science Foundation, the Alfred P. Sloan Foundation, TOTAL S.A., and the Air Force Office of Scientific Research.  The authors would also like to thank Xiangxiong Zhang for helpful discussions.

{\bf Keywords:} PhaseLift, Phase Retrieval, Matrix Completion, Douglas-Rachford, Feasibility, Lifting, Semidefinite Relaxation

{\bf AMS Classifications:} 90C22, 15A83, 65K05, 
\section{Introduction}

We study the recovery of a vector $\bfx_0 \in \R^n$ or $\C^n$ from the set of phaseless linear measurements
$$
|\<\bfx_0, \bfz_i\>| \text{ for }  i = 1, \ldots, m,
$$
where $\bfz_i \in \R^n$ or $\C^n$ are known random sensing vectors.  Such amplitude-only measurements arise in a variety of imaging applications, such as X-ray crystallography \cite{H1993, M1990,CESV2011}, optics \cite{W1963}, and microscopy \cite{MISE2008}.  We seek stable and efficient methods for finding $\bfx_0$ using as few measurements as possible.  

This recovery problem is difficult because the set of real or complex numbers with a given magnitude is nonconvex.  In the real case, there are $2^m$ possible assignments of sign to the $m$ phaseless measurements.  Hence, exhaustive searching is infeasible.  In the complex case, the situation is even worse, as there are a continuum of phase assignments to consider.  A method based of alternated projections avoids an exhaustive search but does not always converge toward a solution \cite{ F1982, GS1972, GL1984}. 



In \cite{CESV2011, CSV2011, CMP2011}, the authors convexify the problem by lifting it to the space of $n\times n$ matrices, where $\bfx \bfx^*$ is a proxy for the vector $\bfx$.  A key motivation for this lifting is that the nonconvex measurements on vectors become linear measurements on matrices \cite{BBCE2009}. The rank-1 constraint is then relaxed to a trace minimization over the cone of positive semi-definite matrices, as is now standard in matrix completion \cite{RFP2007}. This convex program is called PhaseLift in \cite{CSV2011}, where  
it is shown that $\bfx_0$ can be found robustly in the case of random $\bfz_i$, if $m = O(n \log n)$. 
The matrix minimizer is unique, which in turn determines $\bfx_0$ up to a global phase.    


The contribution of the present paper is to show that trace minimization is unnecessary in this lifting framework for the phaseless recovery problem.  The vector $\bfx_0$ can be recovered robustly by an optimizationless  convex problem: one of finding a positive semi-definite matrix that is consistent with linear measurements.   We prove there is only one such matrix, provided that there are $O(n \log n)$ measurements. In other words,  the phase recovery problem can be solved by intersecting two convex sets, without minimizing an objective. We show empirically that two algorithms converge linearly (exponentially fast) toward the solution.  We remark that these methods are simpler than methods for PhaseLift because they require less or no parameter tuning.  A result subsequent to the posting of this paper has improved the number of required measurements to $O(n)$ by considering an alternative construction of the dual certificate that allows tighter probabilistic bounds \cite{CL2012}.

In \cite{BCE2006}, the authors show that the complex phaseless recovery problem from random measurements is \emph{determined} if $m \geq 4n-2$ (with probability one).  This means that the $\bfx$ satisfying $| \< \bfx, \bfz_i \> | = | \< \bfx_0, \bfz_i \> |$ is unique and equal to $\bfx_0$, regardless of the method used to find it.   A corollary of the analysis in \cite{CSV2011}, and of the present paper, is that  this property is stable under perturbations of the data, provided $m = O(n\log n)$. This determinacy is in contrast to compressed sensing and matrix completion, where a prior (sparsity, low-rank) is used to select a solution of an otherwise underdetermined system of equations. The relaxation of this prior ($\ell_1$ norm, nuclear norm) is then typically shown to determine the same solution. No such prior is needed here; the semi-definite relaxation helps find the solution, not determine it.



The determinacy of the recovery problem over $n\times n$ matrices may be unexpected because there are $n^2$ unknowns and only $O(n \log n)$ measurements.  What compensates for the apparent lack of data is the fact that the matrix we seek has rank one and is thus on the edge of the cone of positive semi-definite matrices.  Most perturbed matrices that are consistent with the measurements cease to remain positive semi-definite. In other words, the positive semi-definite cone $\X \succeq 0$ is ``spiky" around a rank-1 matrix $\X_0$.  That is, with high probability, particular random hyperplanes that contain $\X_0$ and have large enough codimension will have no other intersection with the cone.

The present paper does not advocate for fully abandoning trace minimization in the context of phase retrieval.  The structure of the sensing matrices appears to affect the number of measurements required for recovery.  Consider measurements of the form $\bfx_0^* \Phi \bfx_0$, for some $\Phi$.  Numerical simulations (not shown) suggest that $O(n^2)$ measurements are needed if $\Phi$ is a matrix with Gaussian i.i.d. entries.  On the other hand, it was shown in \cite{RFP2007} that minimization of the nuclear norm constrained by Tr$(\X \Phi) = \bfx_0^* \Phi \bfx_0$ recovers $\bfx_0 \bfx_0^*$ with high probability as soon as $m = O(n \log n)$.   Other numerical observations (not shown) suggest that it is the symmetric, positive semi-definite character of $\Phi$ that allows for optimizationless recovery.

The present paper owes much to \cite{CSV2011}, as our analysis is very similar to theirs. We wish to also reference the papers \cite{Singer2011, WDM2012}, where phase recovery is cast as synchronization problem and solved via
a semi-definite relaxation of max-cut type over the complex torus (i.e., the magnitude information is first factored out.) The idea of lifting and semi-definite relaxation was introduced very successfully for the max-cut problem in \cite{GW1995}. The paper \cite{Singer2011} also introduces a fast and efficient method based on eigenvectors of the graph connection Laplacian for solving the angular synchronization problem.  The performance of this latter method was further studied in \cite{BSS2012}.

\subsection{Problem Statement and Main Result}
 Let $x_0 \in \R^n$ or $\C^n$ be a vector for which we have the $m$ measurements $
| \< \bfx_0, \bfz_i \> | = \sqrt{b_i}$,
for independent sensing vectors $\bfz_i$ distributed uniformly on the unit sphere.  
 We write the phaseless recovery problem for $\bfx_0$ as 
\begin{align}
\text{Find } \bfx \text{ such that } A(\bfx) = \bfb, \label{exact-axb}
\end{align}
where $A:\R^n \to \R^m$ is given by  $A(\bfx)_i = |\< \bfx, \bfz_i \> |^2$, and $A(\bfx_0)=\bfb$.

Problem \eqref{exact-axb} can be convexified by lifting it to a matrix recovery problem.  
Let $\A$ and its adjoint be the linear operators
\begin{alignat*}{6}
&\A: \quad &&\mathcal{H}^{n\times n} & &\to \R^m  & &\A^*: \quad &&\R^m & &\to  \mathcal{H}^{n \times n} \\
&&&\X & &\mapsto \{\bfz_i^* \X \bfz_i \}_{i=1, \ldots, m},\qquad  & & & &\lambda && \mapsto \sum_i \lambda_i \zizi,
\end{alignat*}
where $\mathcal{H}^{n\times n}$ is the space of $n \times n$ Hermitian matrices.  Observe that $ \A(\xxstar) = A(\bfx) $ for all vectors $\bfx$.  Letting $\X_0 =\xnotxnotstar$, we note that $\A(\X_0) = b$.  We emphasize that $\A$ is linear in $\X$ whereas $A$ is nonlinear in $\bfx$.  

The matrix recovery problem we consider is
\begin{align}
\text{Find } \X \matrixgeq 0 \text{ such that } \A(\X) = \bfb. \label{exact-axb-lifted}
\end{align}
Without the positivity constraint, there would be multiple solutions whenever $m < \frac{(n+1)n}{2}$.  We include the constraint in order to allow for recovery in this classically underdetermined regime.

Our main result is that the matrix recovery problem \eqref{exact-axb-lifted} has a unique solution when there are $O(n \log n)$ measurements.
\begin{theorem}
\label{theorem-exact-lifted}
Let $\bfx_0 \in \R^n$ or $\C^n$ and $\X_0 =\xnotxnotstar$.  Let $m \geq c n \log n$ for a sufficiently large $c$.   With high probability, $\X = \X_0$ is the unique solution to $\X \matrixgeq 0$ and $\A(\X) = b$.  This probability is at least $1 -  e^{-\gamma \frac{m}{n}}$, for some  $\gamma >0$.

\end{theorem}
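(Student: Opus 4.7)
I would argue by contradiction: assume $\mathbf{H} \neq 0$ satisfies $\A(\mathbf{H}) = 0$ and $\X_0 + \mathbf{H} \matrixgeq 0$, and derive a contradiction. Decompose $\mathbf{H} = \HT + \HTp$ with respect to the tangent space $T = \{\bfx_0 \bfy^* + \bfy \bfx_0^* : \bfy \in \C^n\}$ at the rank-one matrix $\X_0$ and its orthogonal complement (matrices annihilating $\bfx_0$ on both sides). Testing $\X_0 + \mathbf{H} \matrixgeq 0$ against any vector in $\bfx_0^\perp$ immediately yields $\HTp \matrixgeq 0$, the geometric fact that makes the cone ``spiky'' at $\X_0$ and is what substitutes for any objective function in allowing optimizationless recovery.

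\textbf{Two probabilistic ingredients.} The proof, following the scheme of \cite{CSV2011}, rests on two high-probability statements derived via a noncommutative Bernstein inequality applied to sums of random rank-one matrices. First, $\A$ is a near-isometry on $T$: the operator $\mathcal{P}_T \A^* \A \mathcal{P}_T$ concentrates around its mean, a known scalar multiple of the identity on $T$ computed from $\E[(\bfz_i^* M \bfz_i) \zizi]$ under the uniform spherical law. This yields a two-sided estimate relating $\|\A(\HT)\|_2$ to $\|\HT\|_F$ for every $\HT \in T$, valid with probability $\geq 1 - e^{-\gamma m/n}$ once $m \gtrsim n \log n$. Second, a \emph{dual certificate}: a matrix $\Y = \A^*(\lambda) \in \text{range}(\A^*)$ whose tangent component $\Y_T$ is very close in Frobenius norm to a prescribed target in $T$, and whose normal component $\Y_{T^\perp}$ is controlled in operator norm. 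I would construct $\Y$ via Gross's golfing scheme: partition the $m$ measurements into $O(\log n)$ independent batches and, in each batch, apply matrix Bernstein to drive the residual in $T$ down by a constant factor while adding a controlled contribution on $T^\perp$.

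\textbf{Closing and main obstacle.} The identity $\<\Y, \mathbf{H}\> = 0$ (from $\A(\mathbf{H}) = 0$ and $\Y \in \text{range}(\A^*)$) splits as $\<\Y_T, \HT\> = -\<\Y_{T^\perp}, \HTp\>$. The right-hand side, using $\HTp \matrixgeq 0$ and the operator-norm control on $\Y_{T^\perp}$, dominates a constant multiple of $\tr(\HTp)$ in absolute value. The left-hand side is bounded via Cauchy--Schwarz against the Frobenius-norm closeness of $\Y_T$ to its target, combined (if needed) with the PSD-induced pointwise bound $\bfx_0^* \mathbf{H} \bfx_0 \geq -\|\bfx_0\|^4$ to handle the target contribution; meanwhile $\|\HT\|_F$ is controlled by $\tr(\HTp)$ through Step~1 applied to $\A(\HT) = -\A(\HTp)$ together with the elementary estimate $\|\A(\HTp)\|_2 \leq \sqrt{m}\,\|\HTp\|_\text{op} \leq \sqrt{m}\,\tr(\HTp)$. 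Choosing the certificate parameters so that the two sides are incompatible unless $\tr(\HTp) = 0$ forces $\HTp = 0$, and Step~1 then yields $\HT = 0$. The main obstacle is the golfing calibration: the per-batch shrinkage factors must compound to bring $\|\Y_T - \text{target}\|_F$ low enough to close the above inequality chain, while the accumulated $T^\perp$ contribution stays within the allowed operator-norm budget, and all of this must happen in only $O(\log n)$ batches of $O(n)$ measurements each. This balance is what simultaneously produces the $m = O(n \log n)$ sample complexity and the $e^{-\gamma m/n}$ failure probability stated in the theorem.
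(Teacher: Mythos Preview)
Your overall architecture matches the paper's: decompose $\bfH = \X - \X_0$ along $T$ and $T^\perp$, exploit $\HTp \matrixgeq 0$, combine an injectivity statement with a dual certificate $\Y \in \mathcal{R}(\A^*)$, and close via $\langle \Y, \bfH\rangle = 0$. Two concrete choices differ from the paper, and one of them exposes a real gap.

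First, the paper uses the $\ell_1$-isometry properties $m^{-1}\|\A(\X)\|_1 \leq (1+\delta)\|\X\|_1$ on the PSD cone and $m^{-1}\|\A(\X)\|_1 \geq 0.94(1-\delta)\|\X\|$ on $T$ (taken verbatim from \cite{CSV2011}), rather than an $\ell_2$ near-isometry coming from matrix Bernstein on $\mathcal{P}_T \A^*\A \mathcal{P}_T$. Either route can in principle close the inequality chain, so this is a stylistic difference.

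Second---and this is substantive---the paper does \emph{not} use golfing. It builds the certificate in one shot as
\[
\Ybar \;=\; \frac{1}{m}\,\A^*\bigl(\oneE \circ \A\, \Sinv\, 2(\I - \ee)\bigr),
\]
where $\Scal = \E[\bfz\bfz^* \otimes \bfz\bfz^*]$ and the truncation $\oneE$ enables Bernstein-type bounds. Since $\E[\Ybar] \approx 2(\I-\ee)$ already lies entirely in $T^\perp$, concentration yields $\|\YbarT\|_1 \leq 1/2$ and $\YbarTp \matrixgeq \ITp$ directly (Lemmas~\ref{lemma-dual-certificate-t} and~\ref{lemma-dual-certificate-tperp}).

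Your golfing plan is mismatched to the optimizationless setting. The certificate required here must satisfy $\YTp \matrixgeq c\,\ITp$ (definite from below on $T^\perp$); that is what gives $\langle \YTp, \HTp\rangle \geq c\,\tr(\HTp)$ in the closing step. Gross-style golfing, by contrast, drives $\YT$ toward a prescribed nonzero target while keeping $\|\YTp\|$ \emph{small}---the opposite requirement. Your phrasing (operator-norm control on $\YTp$, a nonzero target in $T$, the one-sided bound $\bfx_0^*\bfH\bfx_0 \geq -\|\bfx_0\|^4$) is the template for the trace-minimization certificate of PhaseLift, where one wants $\YT$ near $\bfx_0\bfx_0^*$ and $\|\YTp\| < 1$. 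That certificate only shows $\X_0$ is the trace minimizer among feasible points; it does not prove uniqueness absent the objective. The whole point of this paper is that a \emph{different} certificate---target zero on $T$, definite on $T^\perp$---makes the trace objective unnecessary, and your construction as described does not produce it.
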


\noindent As a result, the phaseless recovery problem has a unique solution, up to a global phase, with $O(n\log n)$ measurements.  In the real-valued case, the problem is determined up to a minus sign.  
\begin{corollary}
\label{corollary-exact}
Let $\bfx_0 \in \R^n$ or $\C^n$. Let $m \geq c n \log n$ for a sufficiently large $c$.  With high probability, $\{e^{i \phi} \bfx_0 \}$ are the only solutions to $A(\bfx) = \bfb$.  This probability is at least $1 - e^{-\gamma \frac{m}{n}}$, for some $\gamma >0$.
\end{corollary}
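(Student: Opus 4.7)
The plan is to derive the corollary directly from Theorem~\ref{theorem-exact-lifted} via the lifting correspondence $\bfx \mapsto \xxstar$. Any $\bfx$ satisfying $A(\bfx)=\bfb$ produces a positive semi-definite matrix $\xxstar$ with $\A(\xxstar) = A(\bfx) = \bfb$, since by construction $\A(\xxstar) = A(\bfx)$ and $\xxstar \matrixgeq 0$ trivially. Thus every solution of the nonconvex feasibility problem in $\bfx$ yields a feasible point of the lifted feasibility problem in $\X$. Consequently, I would condition on the event (of probability at least $1-e^{-\gamma m/n}$) on which Theorem~\ref{theorem-exact-lifted} guarantees that $\X_0 = \xnotxnotstar$ is the unique solution of $\X\matrixgeq 0$, $\A(\X)=\bfb$.

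On that event, the identity $\xxstar = \xnotxnotstar$ must hold for any candidate $\bfx$. The next step is to extract the standard phase ambiguity from this rank-one matrix equation. In the complex case, equality of the two outer products forces $\bfx$ and $\bfx_0$ to span the same one-dimensional subspace, and matching the diagonal entries $|x_j|^2 = |x_{0,j}|^2$ together with the off-diagonal signs $x_j \bar{x}_k = x_{0,j} \bar{x}_{0,k}$ pins down $\bfx = e^{i\phi} \bfx_0$ for a unique $\phi \in [0,2\pi)$ (assuming $\bfx_0\neq 0$; the case $\bfx_0=0$ is trivial). In the real case the same argument restricts $\phi$ to $\{0,\pi\}$, so $\bfx = \pm \bfx_0$, i.e.\ $\phi \in \{0,\pi\}$ in the statement's $e^{i\phi}$ parametrization.

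There is no real obstacle here beyond invoking Theorem~\ref{theorem-exact-lifted}: the work has already been done at the lifted level. The probability bound $1-e^{-\gamma m/n}$ passes through unchanged because the event on which we conclude is exactly the event provided by the theorem, and the implication ``$\bfx$ feasible $\Rightarrow \xxstar$ feasible'' is deterministic and requires no further union bounds or concentration estimates. The only point worth stating carefully in the final write-up is that $\{e^{i\phi}\bfx_0 : \phi \in \R\}$ is indeed contained in the solution set of $A(\bfx)=\bfb$, which is immediate from $|\<e^{i\phi}\bfx_0,\bfz_i\>| = |\<\bfx_0,\bfz_i\>|$, so the corollary's ``only'' is the content being proved.
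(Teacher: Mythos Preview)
Your proposal is correct and matches the paper's own proof essentially line for line: the paper simply notes that, on the high-probability event of Theorem~\ref{theorem-exact-lifted}, $A(\bfx_1)=A(\bfx_0)$ implies $\bfx_1\bfx_1^*=\bfx_0\bfx_0^*$, which in turn forces $\bfx_1=e^{i\phi}\bfx_0$. Your additional remarks (explicitly checking that $\xxstar\matrixgeq 0$, spelling out the real case $\phi\in\{0,\pi\}$, and noting the trivial inclusion $\{e^{i\phi}\bfx_0\}\subset A^{-1}(\bfb)$) are fine elaborations but not required.
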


\noindent Theorem \ref{theorem-exact-lifted} suggests ways of recovering $\bfx_0$.  If an  $\X \in \{\X \matrixgeq 0\} \cap \{\X \mid \A( \X) = \bfb\}$ can be found, $\bfx_0$ is given by the leading eigenvector of $\X$.  See Section \ref{section-numerical-results} for more details on how to find $\X$.

\subsection{Stability result}
In practical applications, measurements are contaminated by noise.  To show stability of optimizationless recovery, we consider the model $$A(\bfx) + \bfnu = \bfb,$$
where $\bfnu$ corresponds to a noise term with bounded $\ell_2$ norm, $\|\bfnu \|_2 \leq \eps$.  The corresponding noisy variant of \eqref{exact-axb} is
\begin{align}
\text{Find } \bfx \text{ such that } \|A(\bfx) - \bfb \|_2 \leq \eps \|\bfx_0\|_2^2. \label{noisy-axb}
\end{align}
We note that all three terms in \eqref{noisy-axb} scale quadratically in $\bfx$ or $\bfx_0$.  

Problem \eqref{noisy-axb} can be convexified by lifting it to the space of matrices.  The noisy matrix recovery problem is  
\begin{align}
\text{Find } \X \matrixgeq 0 \text{ such that } \| \A(\X) - \bfb\|_2 \leq \eps \|\X_0\|_2. \label{noisy-axb-lifted}
\end{align}
We show that all feasible $\X$ are within an $O(\eps)$ ball of $\X_0$ provided there are $O(n \log n)$ measurements.

\begin{theorem}
\label{theorem-noisy}
Let $\bfx_0 \in \R^n$ or $\C^n$ and $\X_0 =\xnotxnotstar$. Let $m \geq c n \log n$ for a sufficiently large $c$.  With high probability, 
$$\X \matrixgeq 0 \text{ and } \| \A(\X) - \bfb \|_2 \leq \eps \|\X_0\|_2 \Longrightarrow 
\| \X - \X_0\|_2 \leq C \eps \|\X_0\|_2,
$$
for some $C > 0$.  This probability is at least $1 -  e^{-\gamma \frac{m}{n}}$, for some $\gamma>0$.
\end{theorem}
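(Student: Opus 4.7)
The plan is to adapt the dual certificate argument that proves Theorem \ref{theorem-exact-lifted} into a quantitative stability estimate. Let $\bfH = \X - \X_0$ and decompose $\bfH = \HT + \HTp$ with respect to the tangent space $T = \{\bfx_0 \bfv^* + \bfv \bfx_0^* : \bfv \in \C^n\}$ to the rank-one manifold at $\X_0$ and its orthogonal complement $T^\perp = \{\bfM \in \mathcal{H}^{n\times n} : \bfM \bfx_0 = 0\}$. The constraint $\X \matrixgeq 0$ together with the rank-one nature of $\X_0$ forces $\HTp \matrixgeq 0$: for any $\bfv \perp \bfx_0$ one has $\bfv^* \X \bfv = \bfv^* \bfH \bfv \geq 0$, and $\HTp$ is exactly the compression of $\bfH$ to $\bfx_0^\perp$. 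The hypothesis $\|\A(\X) - \bfb\|_2 \leq \eps \|\X_0\|_2$ with $\A(\X_0) = \bfb$ and the triangle inequality gives $\|\A(\bfH)\|_2 \leq \eps \|\X_0\|_2$.

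First I would import the dual certificate from the proof of Theorem \ref{theorem-exact-lifted}: a multiplier $\lambda \in \R^m$ for which $\Y := \A^*(\lambda)$ lies in $T^\perp$, satisfies $\YTp \matrixgeq \tfrac12 \ITp$ on $\bfx_0^\perp$, and comes with a quantitative control on $\|\lambda\|_2$. Since $\Y \in T^\perp$ we have $\langle \Y, \HT\rangle = 0$, so
\begin{equation*}
\tfrac12 \|\HTp\|_2 \;\leq\; \tfrac12 \tr(\HTp) \;\leq\; \langle \Y, \HTp \rangle \;=\; \langle \Y, \bfH\rangle \;=\; \langle \lambda, \A(\bfH) \rangle \;\leq\; \|\lambda\|_2 \, \|\A(\bfH)\|_2,
\end{equation*}
where the first inequality uses $\HTp \matrixgeq 0$. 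This delivers $\|\HTp\|_2 \leq C_1 \eps \|\X_0\|_2$ once $\|\lambda\|_2$ is shown to scale like $\sqrt{m}\,\|\X_0\|_2$ (with a constant absorbed into $C_1$).

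To control $\HT$, I would invoke the near-isometry of $\A$ restricted to $T$, which is already a key ingredient in the proof of Theorem \ref{theorem-exact-lifted}: with high probability, $\|\A(\bfM)\|_2$ is comparable to $\|\bfM\|_2$ (in the appropriate $\sqrt{m}/n$ normalization) uniformly for $\bfM \in T$. Combined with the operator bound $\|\A(\HTp)\|_2 \lesssim \|\HTp\|_2$, which follows from standard concentration for the random sums $\sum_i \zizi$, the decomposition $\A(\HT) = \A(\bfH) - \A(\HTp)$ yields $\|\HT\|_2 \leq C_2 \bigl( \|\A(\bfH)\|_2 + \|\HTp\|_2 \bigr) \leq C_3 \eps \|\X_0\|_2$. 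The triangle inequality $\|\bfH\|_2 \leq \|\HT\|_2 + \|\HTp\|_2$ then concludes the proof. The principal obstacle is quantitative rather than conceptual: one must check that the dual certificate of Theorem \ref{theorem-exact-lifted} supplies not just strict positivity on $T^\perp$ but the explicit lower bound $\YTp \matrixgeq \tfrac12 \ITp$, together with a simultaneously controlled $\|\lambda\|_2$, and that the isometry constants on $T$ and the operator-norm bounds on $T^\perp$ come with matching scales. All of these follow from concentration inequalities for the random matrices $\zizi$, but tracking the constants across the entire chain is the delicate part.
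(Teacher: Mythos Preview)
Your high-level plan is right, but there are two genuine gaps that prevent it from going through as written.

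\textbf{The certificate is inexact.} You assume that the dual certificate from the proof of Theorem~\ref{theorem-exact-lifted} satisfies $\Y \in T^\perp$, so that $\langle \Y, \HT\rangle = 0$. It does not: the paper constructs only an \emph{inexact} certificate $\Ybar = \A^*\lambda$ with $\|\YbarT\|_1 \le 1/2$ and $\YbarTp \matrixgeq \ITp$ (see \eqref{inexact-dual-conditions}). The paper explicitly explains that an exact certificate with $\YT=0$ would require intractable multipliers. Consequently your first display becomes
\[
\langle \Ybar, \bfH\rangle \;\ge\; \|\HTp\|_1 - \tfrac{1}{2}\,\|\HT\|,
\]
so the bound on $\HTp$ is contaminated by $\|\HT\|$, and the two pieces must be controlled \emph{simultaneously} rather than sequentially. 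This is exactly what the paper does in Lemma~\ref{lemma-noisy-dual-certificate}: it combines the certificate inequality with the $\ell_1$-isometry estimate $0.94(1-\delta)\|\HT\| \le m^{-1}\|\A(\HT)\|_1 \le 2\eps m^{-1/2} + (1+\delta)\|\HTp\|_1$ to close the loop with a coefficient strictly less than $1$ in front of $\|\HT\|$.

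\textbf{The $\ell_2$ operator bound on $T^\perp$ fails.} Your second step relies on $\|\A(\HTp)\|_2 \lesssim \|\HTp\|_2$ (in the $\sqrt{m}$ normalization). This is false for high-rank matrices in $T^\perp$: take $\HTp = \ITp$, so that $\A(\ITp)_i = \|\bfz_i\|_2^2 - z_{i,1}^2 \approx n$, whence $\|\A(\ITp)\|_2 \approx n\sqrt{m}$ while $\|\ITp\|_2 = \sqrt{n-1}$. The ratio is $\sim \sqrt{mn}$, not $\sqrt{m}$, and concentration of $\sum_i \zizi$ does not rescue this. If you feed this into your chain you lose a factor $\sqrt{n}$ in the final constant. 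This is precisely why the paper works with $\ell_1$ norms on the measurement side (properties \eqref{l1-isometry-psd}--\eqref{l1-isometry-t}) and controls $\|\lambda\|_1$ (shown to satisfy $\|\lambda\|_1 \le 5$ via \eqref{l1-isometry-psd}) rather than $\|\lambda\|_2$: the bound $m^{-1}\|\A(\HTp)\|_1 \le (1+\delta)\|\HTp\|_1$ holds with a dimension-free constant, and pairing $\langle \bfH, \Ybar\rangle = \langle \A(\bfH), \lambda\rangle$ via $\|\A(\bfH)\|_\infty\|\lambda\|_1$ keeps everything of order $\eps$.
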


\noindent As a result, the phaseless recovery problem is stable with $O(n\log n)$ measurements.
\begin{corollary}
\label{corollary-noisy}
Let $\bfx_0 \in \R^n$ or $\C^n$.  Let $m \geq c n \log n$ for a sufficiently large $c$.  With high probability, 
$$ \| A(\bfx) - \bfb \|_2 \leq \eps \|x_0\|_2^2 \Longrightarrow  \left\| \bfx - e^{i \phi} \bfx_0 \right \|_2 \leq C \eps \|\bfx_0 \|_2,
$$
for some $\phi \in [0, 2\pi)$, and for  some $C>0$.
This probability is at least $1 -  e^{-\gamma \frac{m}{n}}$, for some  $\gamma > 0$.
\end{corollary}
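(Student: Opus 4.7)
The plan is to reduce Corollary~\ref{corollary-noisy} to Theorem~\ref{theorem-noisy} by lifting the candidate vector $\bfx$ to the rank-one matrix $\X = \xxstar$, applying the matrix stability result, and then converting the resulting Frobenius-norm bound back to a vector norm bound after choosing an optimal global phase. The lifting is faithful on rank-one iterates, since $\A(\xxstar) = A(\bfx)$ and $\xxstar \matrixgeq 0$. Also, $\|\X_0\|_2 = \|\xnotxnotstar\|_F = \|\bfx_0\|_2^2$, so the hypothesis $\|A(\bfx) - \bfb\|_2 \leq \eps \|\bfx_0\|_2^2$ becomes exactly $\|\A(\xxstar) - \bfb\|_2 \leq \eps \|\X_0\|_2$. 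Theorem~\ref{theorem-noisy} then yields, on the same high-probability event, $\|\xxstar - \xnotxnotstar\|_F \leq C \eps \|\bfx_0\|_2^2$.

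The remaining task is the matrix-to-vector conversion. I would first fix the phase: replace $\bfx$ by $e^{i\phi}\bfx$ with $\phi = -\arg \<\bfx, \bfx_0\>$, which leaves $\xxstar$ unchanged and forces $c := \<\bfx, \bfx_0\> \geq 0$. Writing $a := \|\bfx\|_2$, $b := \|\bfx_0\|_2$, a direct trace computation gives
\begin{equation*}
\|\xxstar - \xnotxnotstar\|_F^2 \;=\; a^4 + b^4 - 2c^2 \;=\; (a^2 - b^2)^2 \;+\; 2(a^2 b^2 - c^2).
\end{equation*}
Both summands on the right are nonnegative, so each is bounded by $C^2 \eps^2 b^4$. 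The first bound gives $|a - b| = |a^2-b^2|/(a+b) \leq C \eps b$ (assuming $C\eps \leq 1/2$, say). The second gives $ab - c = (a^2 b^2 - c^2)/(ab + c) \lesssim \eps^2 b^4/(ab+c)$, and because $a \geq (1 - C\eps) b$ and $c \geq 0$, we have $ab + c \gtrsim b^2$, so $ab - c \lesssim \eps^2 b^2$. Combining,
\begin{equation*}
\|\bfx - \bfx_0\|_2^2 \;=\; (a-b)^2 \;+\; 2(ab - c) \;\lesssim\; \eps^2 b^2,
\end{equation*}
which, after restoring the phase, is exactly the conclusion of the corollary.

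The main obstacle I anticipate is this algebraic step, not the probabilistic one: one must verify that the matrix bound \emph{linearly} (rather than as a $\sqrt{\eps}$) controls the vector error. This works because $\X_0$ is rank one, so the expansion above factors through $ab + c$, which stays bounded away from zero once the matrix perturbation is small. The regime $\eps$ not small is vacuous (the claimed bound is uninformative there), so it is harmless to restrict to $\eps \leq \eps_0$ for some absolute $\eps_0$ when fixing the final constant $C$.
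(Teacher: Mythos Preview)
Your proposal is correct. Both your argument and the paper's start identically---lift $\bfx$ to $\X = \xxstar$, invoke Theorem~\ref{theorem-noisy} to get $\|\xxstar - \xnotxnotstar\|_2 \leq C\eps\|\bfx_0\|_2^2$---but diverge in the matrix-to-vector step. The paper passes through spectral perturbation: it applies Weyl's theorem to control the eigenvalues of $\X$, isolates the leading unit eigenvector $\bfv_1$, and uses the identity $1 - |\<\bfx_0,\bfv_1\>|^2 = \tfrac{1}{2}\|\xnotxnotstar - \bfv_1\bfv_1^*\|_2^2$ to bound $\|\bfx_0 - \bfv_1\|_2$. Your argument is more elementary and more direct for this particular corollary: you exploit that \emph{both} $\X$ and $\X_0$ are rank one, expand $\|\xxstar - \xnotxnotstar\|_F^2 = (a^2-b^2)^2 + 2(a^2b^2 - c^2)$, and read off bounds on $|a-b|$ and $ab-c$ separately. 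This avoids any spectral perturbation machinery and keeps track of the norm of $\bfx$ (not just its direction) in one stroke. The paper's route, by contrast, is the one that generalizes: since it works with the leading eigenvector of a possibly higher-rank feasible $\X$, it is what one actually needs for the numerical recovery described later, where the iterate is not exactly rank one. For the corollary as stated, your approach is cleaner.
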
 

\noindent Theorem \ref{theorem-noisy} ensures that numerical methods can be used to find $\X$.  See Section \ref{section-numerical-results} for ways of finding $\X \in \{\X \matrixgeq 0\} \cap \{\A(\X) \approx \bfb\}$.  As the recovered matrix may have large rank,  we approximate $\bfx_0$ with the leading eigenvector of $\X$.

\subsection{Organization of this paper}
In Section \ref{section-main-result}, we prove a lemma containing the central argument for the proof of Theorem \ref{theorem-exact-lifted}.  Its assumptions involve $\ell_1$-isometry properties and the existence of an inexact dual certificate.  Section \ref{section-proof-main-theorem} provides the proof of Theorem \ref{theorem-exact-lifted} in the real-valued case.  It cites \cite{CSV2011} for the $\ell_1$-isometry properties and Section \ref{section-inexact-dual-certificate} for existence of an inexact dual certificate. In Section \ref{section-inexact-dual-certificate} we construct an inexact dual certificate and show that it satisfies the required properties in the real-valued case.  In section \ref{section-stability} we prove Theorem \ref{theorem-noisy} on stability in the real-valued case.  In Section \ref{section:complex-case}, we discuss the modifications in the complex-valued case.  In Section \ref{section-numerical-results}, we present computational methods for the optimizationless problem with comparisons to PhaseLift.  We also simulate them to establish stability empirically.

\subsection{Notation}
We use boldface for variables representing vectors or matrices.  We use normal typeface for scalar quantities.  Let $z_{i,k}$ denote the $k$th entry of the vector $\bfz_i$.  For two matrices, let $\< \X, \Y \> = \tr(\Y^* \X)$ be the Hilbert-Schmidt inner product.  Let $\sigma_i$ be the singular values of the matrix $\X$.  We define the norms $$\|\X\|_p = \left(\sum_i \sigma_i^p\right)^{1/p}.$$  In particular, we write the Frobenius norm of $\X$ as $\|\X\|_2$.  We write the spectral norm of $\X$ as $\|\X\|$.

An $n$-vector $\bfx$ generates a decomposition of $\R^n$ or $\C^n$ into two subspaces.  These subspaces are the span of $\bfx$ and the span of all vectors orthogonal to $\bfx$.  Abusing notation, we write these subspaces as  $\bfx$ and $\bfx^\perp$. The space of $n$-by-$n$ matrices is correspondingly partitioned into the four subspaces $\bfx \, \otimes \, \bfx, \; \bfx \,\otimes \, \bfx^\perp, \; \bfx^\perp \, \otimes \, \bfx$, and $\bfx^\perp \, \otimes \, \bfx^\perp$, where $\otimes$ denotes the outer product. We write $T_\bfx$ for the set of symmetric matrices which lie in the direct sum of the first three subspaces, namely $T_\bfx = \{ \bfx \bfy^* + \bfy \bfx^* \mid  \bfy \in \R^n \text{ or } \C^n\}$. Correspondingly, we write $T^\perp_\bfx$ for the set of symmetric matrices in the fourth subspace.  We note that $T^\perp_\bfx$ is the orthogonal complement of $T_\bfx$ with respect to the Hilbert-Schmidt inner product. Let $\bfe_1$ be the first coordinate vector. For short, let $T = T_{\bfe_1}$ and $T^\bot = T^\bot_{\bfe_1}$.  We denote the projection of $\X$ onto $T$ as either $\mathcal{P}_T \X$ or $\XT$.  We denote projections onto $T^\perp$ similarly.  

We let $\I$ be the $n\times n$ identity matrix.  We denote the range of $\A^*$ by $\mathcal{R}( \A^*)$.

\section{Proof of Main Result}
\label{section-main-result}
Because of scaling and the property that the measurement vectors $\bfz_i$ come from a rotationally invariant distribution, we take $\bfx_0 = \bfe_1$ without loss of generality.  Because all measurements scale with the length $\|\bfz_i\|_2$, it is equivalent to establish the result for independent unit normal sensing vectors $\bfz_i$.  To prove Theorem \ref{theorem-exact-lifted}, we use an argument based on inexact dual certificates and $\ell_1$-isometry properties of $\A$.  This argument parallels that of \cite{CSV2011}.  We directly use the $\ell_1$-isometry properties they establish, but we require different properties on the inexact dual certificate.

\subsection{About Dual Certificates}

As motivation for the introduction of an inexact dual certificate in the next section, observe that if $\A$ is injective on $T$, and if there exists a (exact) dual certificate $\Y \in \mathcal{R}(\A^*)$ such that $$\YT = 0 \quad \text{ and } \quad \YTp \matrixgt 0,$$ then $\X_0$ is the only solution to $\A(\X) = \bfb$.  This is because $$
0 = \< \X - \X_0, \Y \> = \<\XTp, \YTp \> \Rightarrow \XTp = 0 \Rightarrow \X = \X_0,
$$
where the first equality is because $\Y \in \mathcal{R}(\A^*)$ and $\A(\X) = \A(\X_0)$.  The last implication follows from injectivity on $T$.  

Conceptually, $\Y$ arises as a Lagrange multiplier, dual to the constraint $\X \succeq 0$ in the feasibility problem
\[
\min \; 0  \quad \text{ such that } \quad  \A(\X) = \bfb, \qquad \X \succeq 0.
\]
Dual feasibility requires $\Y \succeq 0$. As visualized in Figure \ref{fig:dual-certificate-interpretation}a,  $\Y$ acts as a vector normal to a codimension-1 hyperplane that separates the lower-dimensional space of solutions $\{\A(\X) = b\}$ from the positive matrices not in $T$. The condition $\YTp \matrixgt 0$ is further needed to ensure that this hyperplane only intersects the cone along $T$,  ensuring uniqueness of the solution.

The nullspace condition $\YT = 0$ is what makes the certificate exact. As $\Y \in \mathcal{R}(\A^*)$, $\Y$ must be of the form $\sum_i \lambda_i \zizi$. The strict requirement that $\YT = 0$ would force the $\lambda_i$ to be complicated (at best algebraic) functions of all the $\bfz_j$, $j = 1,\ldots, m$. We follow \cite{CSV2011} in constructing instead an inexact dual certificate, such that $\YT$ is close to but not equal to $0$, and for which the $\lambda_i$ are more tractable (quadratic) polynomials in the $\bfz_i$. A careful inspection of the injectivity properties of $\A$, in the form of the RIP-like condition in \cite{CSV2011}, is what allows the relaxation of the nullspace condition on $\Y$.


\begin{figure}
	\begin{center}
  \begin{psfrags}
    \psfragscanon
    \psfrag{a}[rc]{$-\Ybar$}
    \psfrag{b}[rc]{$-\Y$}
    \psfrag{c}[cb]{}
    \psfrag{d}[lb]{}
    \psfrag{e}[lb]{}
    \psfrag{f}[cm]{Exact dual certificate}
    \psfrag{g}[cm]{Inexact dual certificate}
    \psfrag{h}[lb]{$\X_0$}
    \includegraphics[width=.75 \textwidth]{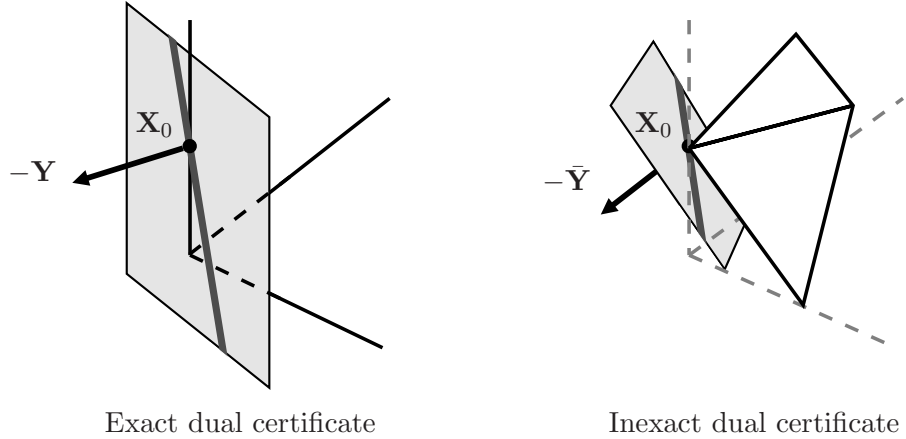}
  \end{psfrags}
  	\end{center}
  \caption{The graphical interpretation of the exact and inexact dual certificates.  The positive axes  represent the cone of positive matrices.  The thick gray line represents the solutions to $\A(\X) = \bfb$.  The exact dual certificate $\Y$ is a normal vector to a hyperplane that separates the space of solutions from positive matrices.   When the dual certificate is inexact, we use the fact
  that $\ell_1$-isometry properties imply $\X$ is restricted to the cone \eqref{cone-condition}.  The inexact dual certificate $\Ybar$ is normal to a hyperplane that separates $\X_0$ from the rest of this restricted cone.  As shown, the hyperplane normal to $\Ybar$ does not separate $\X_0$ from positive matrices.  }
  \label{fig:dual-certificate-interpretation}
\end{figure}

\subsection{Central Lemma on Inexact Dual Certificates}

With further information about feasible $\X$, we can relax the property that $\YT$ is exactly zero.  In \cite{CSV2011}, the authors show that all feasible $\X$ lie in a cone that is approximately  $\{ \|\XTp\|_1 \geq \|\XT - \X_0\|\}$, provided there are  $O(n)$ measurements. As visualized in Figure \ref{fig:dual-certificate-interpretation}b, $\Ybar$ acts as a vector normal to a hyperplane that separates $\X_0$ from the rest of this cone.    The proof of Theorem \ref{theorem-exact-lifted} hinges on the existence of such an inexact dual certificate, along with $\ell_1$-isometry properties that establish $\X$ is in this cone with high probability.
\begin{lemma}
\label{lemma-exact-dual-certificate}
Suppose that $\A$ satisfies 
\begin{alignat}{2}
m^{-1} \| \A(\X) \|_1 &\leq (1+ \delta) \|\X\|_1 \quad &&\text{ for all } \X \matrixgeq 0 \label{l1-isometry-psd},\\
m^{-1} \| \A(\X) \|_1 &\geq 0.94(1 - \delta) \|\X\| \quad &&\text{ for all } \X \in T, \label{l1-isometry-t} 
\end{alignat}
for some $\delta \leq 1/9$.  Suppose that there exists $\Ybar \in \mathcal{R} (\A^*)$ satisfying
\begin{align}
\|\YbarT\|_1 \leq 1/2 \quad \text{ and } \quad \YbarTp \matrixgeq \ITp. \label{inexact-dual-conditions}
\end{align}
Then, $\X_0$ is the unique solution to \eqref{exact-axb-lifted}.
\end{lemma}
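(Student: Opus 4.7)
The strategy is to sandwich $\|\HTp\|_1$ between itself multiplied by a constant strictly less than one, which forces $\HTp = 0$; the $\ell_1$-isometry on $T$ then forces $\HT = 0$ as well. Everything follows from combining the two hypotheses on $\Ybar$ with the two $\ell_1$-isometry conditions, using a single structural positivity observation about feasible perturbations.

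Fix any feasible $\X \matrixgeq 0$ with $\A(\X) = \bfb$ and set $\bfH = \X - \X_0$, so $\A(\bfH) = 0$. Decompose $\bfH = \HT + \HTp$. Because $\X_0 = \bfe_1 \bfe_1^* \in T$, the component $\HTp$ coincides with $\XTp$; since $T^\perp$ is precisely the block of Hermitian matrices supported on the last $n-1$ coordinates and principal submatrices of PSD matrices are PSD, the hypothesis $\X \matrixgeq 0$ yields $\HTp \matrixgeq 0$. This PSD property of $\HTp$ is the structural fact that allows an \emph{inexact} certificate to replace an exact one.

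Next I would exploit $\<\Ybar, \bfH\> = 0$ (which holds because $\Ybar \in \mathcal{R}(\A^*)$ and $\A(\bfH) = 0$), rewritten as $\<\YbarTp, \HTp\> = -\<\YbarT, \HT\>$. Pairing $\YbarTp \matrixgeq \ITp$ with $\HTp \matrixgeq 0$ gives $\<\YbarTp, \HTp\> \geq \tr(\HTp) = \|\HTp\|_1$, while nuclear--operator duality gives $|\<\YbarT, \HT\>| \leq \|\YbarT\|_1 \|\HT\| \leq \tfrac{1}{2}\|\HT\|$, so
$$\|\HTp\|_1 \leq \tfrac{1}{2} \|\HT\|.$$
On the other hand, from $\A(\HT) = -\A(\HTp)$, applying \eqref{l1-isometry-t} to $\HT \in T$ from below and \eqref{l1-isometry-psd} to $\HTp \matrixgeq 0$ from above yields
$$\|\HT\| \leq \frac{1+\delta}{0.94(1-\delta)} \|\HTp\|_1.$$
Substituting back produces a self-bounding inequality $\|\HTp\|_1 \leq \frac{1+\delta}{2 \cdot 0.94(1-\delta)} \|\HTp\|_1$; for $\delta \leq 1/9$ the prefactor is bounded by $10/(2 \cdot 0.94 \cdot 8) \approx 0.67 < 1$, forcing $\|\HTp\|_1 = 0$. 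Then $\HTp = 0$, and the lower isometry on $T$ yields $\|\HT\| = 0$, so $\bfH = 0$ and $\X = \X_0$.

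The only delicate point is the bookkeeping of constants: relaxing the exact nullspace condition $\YT = 0$ to $\|\YbarT\|_1 \leq 1/2$ costs a factor of $\|\HT\|$ on the right-hand side, and this can only be reabsorbed by chaining the $T$-isometry with the PSD-isometry through the identity $\|\A(\HT)\|_1 = \|\A(\HTp)\|_1$. The threshold $\delta \leq 1/9$ is calibrated precisely so that the product of constants stays strictly below one; all other steps are routine.
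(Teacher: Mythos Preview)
Your proof is correct and follows essentially the same argument as the paper's: both use $\HTp \matrixgeq 0$, the certificate identity $\<\bfH,\Ybar\>=0$ to get $\|\HTp\|_1 \leq \tfrac{1}{2}\|\HT\|$, and the two $\ell_1$-isometries via $\|\A(\HT)\|_1 = \|\A(\HTp)\|_1$ to get the reverse comparison. The only cosmetic difference is the order of elimination---the paper substitutes the cone inequality into the certificate inequality to kill $\HT$ first, whereas you substitute the other way to kill $\HTp$ first; the constants and logic are identical.
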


\begin{proof}[Proof of Lemma \ref{lemma-exact-dual-certificate}]

Let $\X$ solve \eqref{exact-axb-lifted}, and let $\bfH = \X - \X_0$.
We start by showing, as in \cite{CSV2011}, that the $\ell_1$-isometry conditions \eqref{l1-isometry-psd}--\eqref{l1-isometry-t} guarantee solutions lie on the cone 
\begin{align}
\| \HTp  \|_1 \geq \frac{0.94 (1-\delta)}{1+\delta} \| \HT\|. \label{cone-condition}
\end{align}
This is because $$0.94 (1-\delta) \| \HT \| \leq m^{-1} \| \A (\HT) \|_1 = m^{-1} \| \A (\HTp) \|_1 \leq (1+\delta) \| \HTp \|_1,$$
where the equality comes from $0 = \A(\bfH) = \A(\HT) + \A(\HTp)$, and the two inequalities come from the $\ell_1$-isometry properties \eqref{l1-isometry-psd}--\eqref{l1-isometry-t} and the fact that $\HTp \matrixgeq 0$.

Because $\A(\bfH) = 0$ and $\Ybar \in \mathcal{R}(\A^*),$
\begin{align}
0 & = \< \bfH, \Ybar \> \nonumber \\
&= \< \HT, \YbarT \> + \< \HTp, \YbarTp \> \nonumber \\
&\geq \|\HTp \|_1 - \frac{1}{2}\|\HT\| \label{exact-bound-htp-ht}\\
&\geq \left( \frac{0.94 (1-\delta)}{1+\delta} - \frac{1}{2} \right) \|\HT\|, \label{exact-bound-ht}
\end{align}
where \eqref{exact-bound-htp-ht} and \eqref{exact-bound-ht} follow from  \eqref{inexact-dual-conditions} and \eqref{cone-condition}, respectively.  Because the constant in \eqref{exact-bound-ht} is positive, we conclude $\HT = 0$.  Then, \eqref{exact-bound-htp-ht} establishes $\HTp = 0$.
\end{proof}

\subsection{ Proof of Theorem \ref{theorem-exact-lifted} and Corollary \ref{corollary-exact}}
\label{section-proof-main-theorem}
We use  Lemma \ref{lemma-exact-dual-certificate} to prove Theorem \ref{theorem-exact-lifted} for real-valued signals.
\begin{proof}[Proof of Theorem \ref{theorem-exact-lifted}]
We need to show that \eqref{l1-isometry-psd}--\eqref{inexact-dual-conditions} hold with high probability if $m > c n \log n$ for some $c$.  Lemmas 3.1 and 3.2 in \cite{CSV2011} show that \eqref{l1-isometry-psd} and \eqref{l1-isometry-t} both hold with probability of at least $1 - 3 e^{-\gamma_1 m}$ provided $m > c_1 n$ for some $c_1$.  In section \ref{section-inexact-dual-certificate}, we construct $\Ybar \in \mathcal{R}({\A^*})$.  As per Lemma \ref{lemma-dual-certificate-t}, $\|\YbarT\|_1 \leq 1/2$ with probability at least $1 - e^{-\gamma_2 m / n}$ if $m > c_2 n$.  As per Lemma \ref{lemma-dual-certificate-tperp}, $\|\YbarTp - 2 \ITp \| \leq 1$ with probability at least $1 - 2 e^{-\gamma_2 m / \log n}$ if $m > c_3 n \log n$.  Hence, $\YbarTp \matrixgeq \ITp$ with at least the same probability.  Hence,  all of the conditions of Lemma \ref{lemma-exact-dual-certificate} hold with probability at least $1 - e^{-\gamma m/n}$ if $m > c n \log n$ for some $c$ and $\gamma$.
\end{proof}
\noindent The proof of Corollary \ref{corollary-exact} is immediate because, with high probability, Theorem \ref{theorem-exact-lifted} implies $$
A(\bfx_1) = A(\bfx_0) \Rightarrow \bfx_1\bfx_1^* = \bfx_0\bfx_0^* \Rightarrow \bfx_1 = e^{i \phi} \bfx_0.
$$

\section{Existence of Inexact Dual Certificate}
\label{section-inexact-dual-certificate}
To use Lemma \ref{lemma-exact-dual-certificate} in the proof of Theorem \ref{theorem-exact-lifted}, we need to show that there exists an inexact dual certificate satisfying \eqref{inexact-dual-conditions} with high probability.  Our inexact dual certificate vector is different from that in \cite{CSV2011}, but we use identical tools for its construction and analysis.  We also adopt similar notation.

We note that $\A^* \A ( \X) =  \sum_i \< \X, \bfz_i \bfz_i^* \> \bfz_i \bfz_i^*$, which can alternatively be written as 
$$
\A^* \A = \sum_{i=1}^m \bfz_i \bfz_i^* \otimes \bfz_i \bfz_i^*.
$$
We let $\Scal= \E[\bfz_i \bfz_i^* \otimes \bfz_i \bfz_i^*]$.  The operator $\Scal$ is invertible.  It and its inverse are given by
\begin{align}
\Scal(\X) &= 2 \X + \tr(\X) \I, \nonumber \\
\Sinv(\X) &= \frac{1}{2} \left( \X - \frac{1}{n+2} \tr(\X) \I \right). \label{s-inv-definition}
\end{align}
We define the inexact dual certificate 
\begin{align}
\Ybar = \frac{1}{m} \sum_{i=1}^m 1_{E_i} \Y_i, \label{defn-ybar}
\end{align}
where
\begin{align}
\Y_i &=\left [ \frac{3}{n+2} \|\bfz_i\|_2^2 - z_{i,1}^2 \right ] \bfz_i \bfz_i^*, \label{def_yi}\\
E_i &= \{ |z_{i,1}| \leq \sqrt{2 \beta \log n}\} \cap \{ \| \bfz_i \|_2 \leq \sqrt{3 n} \}.
\end{align}
Alternatively, we can write the inexact dual certificate vector as
\begin{align}
\Ybar = \frac{1}{m} \A^* \left( \oneE \circ \A \Sinv 2 (\I - \ee) \right),    \label{defn-ybar-second}
\end{align}
where $(\oneE)_i = 1_{E_i}$ and $\circ$ is the elementwise product of vectors.  In our notation, truncated quantities have overbars.  We subsequently omit the subscript $i$ in $\bfz_i$ when it is implied by context. 

\subsection{Motivation for the Dual Certificate}

For ease of understanding, we first consider a candidate dual certificate given by
\begin{align}
\widetilde{\Y} = \frac{1}{m} \A^* \A \Sinv 2 (\I - \ee ). \nonumber
\end{align}
The motivation for this candidate is twofold:  $\widetilde{\Y} \in \mathcal{R}(\A^*)$,  and  $\widetilde{\Y} \approx 2(\I - \ee)$ as $m \to \infty$ because $\E[ \A^* \A] = m \mathcal{S}$.  In this limit, $\widetilde{Y}$ becomes an exact dual certificate.  For finite $m$, it should be close but inexact.  We can write $$\widetilde{\Y} = \frac{1}{m}  \sum_i \Y_i,$$ where  $\Y_i$ is an independent sample of the random matrix
\begin{align*}
\left [ \frac{3}{n+2} \|\bfz\|_2^2 - z_1^2 \right ] \bfz \bfz^*,
\end{align*}
where $\bfz \sim \mathcal{N}(0, \I)$.  Because the vector Bernstein inequality requires bounded vectors, we truncate the dual certificate in the same manner as \cite{CSV2011}.  That is, we consider $1_{E_i} \Y_i$, completing the derivation of \eqref{defn-ybar}.

\subsection{Bounds on $\Ybar$}

We define $\pi(\beta) = \PP(E^c)$, where $E$ is the event given by 
\begin{align}E &= \{ |z_{1}| \leq \sqrt{2 \beta \log n}\} \cap \{ \| \bfz \|_2 \leq \sqrt{3 n} \},
\end{align}
where $\bfz \sim \mathcal{N}(0, \I)$.
In \cite{CSV2011}, the authors provide the bound $\pi(\beta) \leq \PP( |z_1| > \sqrt{2 \beta \log n}) + \PP(\|\bfz\|_2^2 > 3n) \leq n^{-\beta} + e^{-n/3}$, which holds if $2 \beta \log n \geq 1$.

We now present two lemmas that establish that $\Ybar$ is approximately $2(\I - \ee)$, and is thus  an inexact dual certificate satisfying  \eqref{inexact-dual-conditions}.

\begin{lemma}
\label{lemma-dual-certificate-t}
Let $\Ybar$ be given by \eqref{defn-ybar}.  There exists positive $\gamma$ and $c$ such that for sufficiently large $n$
$$
\PP \left( \left \| \YbarT  \right \|_1  \geq \frac{1}{2}\right) \leq \exp \left(-\gamma \frac{m}{n} \right)
$$
if $m \geq c n$.
\end{lemma}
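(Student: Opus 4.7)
The plan is to reduce the nuclear-norm bound $\|\YbarT\|_1 \leq 1/2$ to a concentration estimate for a random vector in $\R^n$. Since every matrix in $T = T_{\bfe_1}$ has rank at most two, a direct eigenvalue computation in the $2$-plane spanned by $\bfe_1$ and $\bfy$ gives the identity $\|\bfe_1 \bfy^* + \bfy \bfe_1^*\|_1 = 2\|\bfy\|_2$ (the two nonzero eigenvalues are $y_1 \pm \|\bfy\|_2$, and one uses $|y_1| \leq \|\bfy\|_2$ together with $|a+b|+|a-b|=2\max(|a|,|b|)$). I will first apply $\mathcal{P}_T$ to each summand: $\mathcal{P}_T(\bfz_i\bfz_i^*) = z_{i,1}(\bfe_1\bfz_i^* + \bfz_i\bfe_1^*) - z_{i,1}^2\ee$. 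This lets me write $\YbarT = \bfe_1 \bar{\bfy}^* + \bar{\bfy} \bfe_1^*$, where $\bar{\bfy} = m^{-1}\sum_i 1_{E_i} \bfy_i$ and
$$\bfy_i \;=\; c_i z_{i,1} \bfz_i - \tfrac{1}{2} c_i z_{i,1}^2 \bfe_1, \qquad c_i = \tfrac{3}{n+2}\|\bfz_i\|_2^2 - z_{i,1}^2.$$
It will then suffice to show $\|\bar{\bfy}\|_2 \leq 1/4$ with the stated probability.

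Next, I will verify that the untruncated expectation vanishes: $\E[\bfy] = 0$. Coordinates $k>1$ vanish by odd symmetry in $z_k$, while the first coordinate gives $\tfrac{1}{2}\E[c z_1^2] = \tfrac{1}{2}(\tfrac{3}{n+2}(\E z_1^4 + (n-1)\E z_1^2) - \E z_1^4) = 0$. This cancellation is exactly what motivates the factor $3/(n+2)$ in the definition of $\Y_i$. The truncation bias $\|\E[1_E\bfy]\|_2 = \|\E[1_{E^c}\bfy]\|_2$ can then be controlled by Cauchy--Schwarz together with $\pi(\beta) \leq n^{-\beta} + e^{-n/3}$; picking $\beta$ fixed but sufficiently large makes it smaller than $1/8$.

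The main step is vector Bernstein applied to $S = \sum_i (1_{E_i}\bfy_i - \E[1_E\bfy])$. On $E_i$ the truncation gives $|c_i| \leq 9 + 2\beta\log n$, $|z_{i,1}| \leq \sqrt{2\beta\log n}$, and $\|\bfz_i\|_2 \leq \sqrt{3n}$, producing the worst-case bound $\|1_{E_i}\bfy_i\|_2 \leq R = O(\sqrt{n}\,\log^{3/2} n)$. A direct Gaussian moment computation yields $\E[\|\bfy\|_2^2] \leq \sigma^2 = O(n)$. Applying vector Bernstein with deviation $t = m/8$, and noting that $\sigma^2/R = \Omega(\sqrt{n}/\log^{3/2} n)$ dominates any fixed constant for large $n$, the variance branch controls the rate and yields
$$\PP(\|S\|_2 \geq m/8) \leq \exp(-\gamma m/n).$$
Combined with the centering bound $\|\E[1_E \bfy]\|_2 \leq 1/8$, this gives $\|\bar{\bfy}\|_2 \leq 1/4$ with probability at least $1 - \exp(-\gamma m/n)$ whenever $m \geq c n$.

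The main computational obstacle will be obtaining the tight second-moment bound $\sigma^2 = O(n)$. Sixth-order Gaussian terms such as $\E[c^2 z_1^2\|\bfz\|_2^2]$ expand into polynomials in the independent $\chi^2$ variables $z_1^2$ and $\|\bfz\|_2^2 - z_1^2$ of nominal size $n^3$; the factor $1/(n+2)^2$ coming from $c^2$ is what brings this down to $O(n)$. A less careful bound (or a different choice of coefficient in $c_i$) would inflate $\sigma^2$ by powers of $n$ and degrade the final sample complexity. Once this moment is under control, the remainder is a fairly standard truncation plus vector Bernstein argument.
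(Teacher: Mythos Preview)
Your proposal is correct and follows essentially the same route as the paper: reduce $\|\YbarT\|_1$ to the $\ell_2$ norm of a random $n$-vector, show the untruncated mean vanishes and the truncation bias is $O(\sqrt{\pi(\beta)})$, then apply the vector Bernstein inequality with variance proxy $V=O(mn)$ and range $R=O(\sqrt{n}\,(\log n)^{3/2})$. The only cosmetic difference is in the reduction step: the paper uses the cruder chain $\|\YbarT\|_1\leq\sqrt{2}\,\|\YbarT\|_2\leq 2\,\|\YbarT\bfe_1\|_2$ and works with $\bfy_i=c_i z_{i,1}\bfz_i$, whereas you compute the eigenvalues exactly to get $\|\YbarT\|_1=2\|\bar\bfy\|_2$ with $\bfy_i=c_i z_{i,1}\bfz_i-\tfrac{1}{2}c_i z_{i,1}^2\,\bfe_1$; the two vectors differ only by a factor of $1/2$ in the first coordinate, and the subsequent moment and concentration estimates are identical in form and scale.
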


\begin{lemma}
\label{lemma-dual-certificate-tperp}
Let $\Ybar$ be given by \eqref{defn-ybar}.  There exists positive  $\gamma$ and $c$ such that for sufficiently large $n$
$$
\PP \left( \left \| \YbarTp - 2 \ITp  \right \|  \geq1 \right) \leq 2 \exp \left(-\gamma \frac{m}{\log n} \right)
$$
if $m \geq c n \log n$.

\end{lemma}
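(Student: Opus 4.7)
The plan is to decompose
\[
\YbarTp - 2\ITp = (\YbarTp - \E\YbarTp) + (\E\YbarTp - 2\ITp)
\]
into a mean-zero fluctuation and a truncation bias, and to bound each in spectral norm by, say, $1/2$. The first step is to verify that the untruncated summand has the correct mean: a direct Gaussian moment calculation gives $\E[\Y_i] = \frac{3}{n+2}\E[\|\bfz\|_2^2 \bfz\bfz^*] - \E[z_{1}^2 \bfz\bfz^*] = 3\I - (\I + 2\ee) = 2(\I - \ee)$, so $\E[\PTp(\Y_i)] = 2\ITp$ and the bias equals precisely $-\E[\PTp(1_{E^c}\Y)]$.

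For the bias, Cauchy--Schwarz yields
\[
\|\E[\PTp(1_{E^c}\Y)]\| \leq \sqrt{\pi(\beta)}\,\sqrt{\E\|\PTp(\Y)\|^2},
\]
where $\pi(\beta) \leq n^{-\beta} + e^{-n/3}$ as already quoted in the text, and $\E\|\PTp(\Y)\|^2 = O(n^2)$ by another Gaussian moment computation (the relevant quantity is $\E[a^2\|\tilde{\bfz}\|_2^4]$ with $\tilde{\bfz}$ the restriction of $\bfz$ to $T^\perp$). Taking $\beta$ a sufficiently large absolute constant makes the bias smaller than $1/2$ for all $n$.

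The deviation is $m^{-1}\sum_i W_i$ with $W_i := \PTp(1_{E_i}\Y_i) - \E[\PTp(1_{E_i}\Y_i)]$, which are independent, centered, Hermitian, and supported on the $(n-1)$-dimensional block $T^\perp$. The natural tool is the matrix Bernstein inequality, which requires a uniform spectral bound $L$ on $\|W_i\|$ and a matrix variance $V = \|\E W_i^2\|$. On $E_i$ we have $|3\|\bfz_i\|_2^2/(n+2) - z_{i,1}^2| \leq 9 + 2\beta\log n$ and $\|\tilde{\bfz}_i\|_2^2 \leq 3n$, giving $L = O(n\log n)$ after centering. Rotational invariance of the last $n-1$ coordinates of $\bfz_i$ forces $\E W_i^2$ to be a scalar multiple of $\ITp$, whose common eigenvalue equals $(n-1)^{-1}\E[1_{E_i} a_i^2 \|\tilde{\bfz}_i\|_2^4] = O(n)$ by a chi-squared moment computation; hence $V = O(n)$. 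Plugging $L$, $V$, the deviation threshold $1/2$, and the ambient dimension $n-1$ into matrix Bernstein and simplifying produces a tail of the asserted form $2\exp(-\gamma m/\log n)$ as soon as $m \geq cn\log n$ for a sufficiently large $c$.

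The main obstacle is the mismatch between the worst-case bound $L = O(n\log n)$ and the typical size $O(n)$ of $\|\PTp(\Y_i)\|$; the extra $\log n$ in $L$ is driven by the rare event that $|z_{i,1}|$ approaches the truncation threshold $\sqrt{2\beta \log n}$. Keeping the variance $V$ linear in $n$ (rather than inheriting the same $\log n$) is what allows the Bernstein tail to reach the stated $m/\log n$ exponent once the ambient-dimension factor $2(n-1)$ is absorbed into the exponential via $m \geq c n \log n$ with $c$ large. A secondary delicate choice is $\beta$, which must be taken large enough to tame the bias while only affecting absolute constants in the final tail.
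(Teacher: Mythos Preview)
Your overall decomposition into bias plus fluctuation is fine, and your bias estimate via Cauchy--Schwarz is correct. The gap is in the fluctuation bound: plugging $L = O(n\log n)$ and $V = O(n)$ into the matrix Bernstein inequality does \emph{not} produce the asserted tail. With $t = 1/2$, $\sigma^2 = mV$, and dimension $n-1$, matrix Bernstein gives
\[
\PP\Bigl(\Bigl\|\tfrac{1}{m}\sum_i W_i\Bigr\| \geq \tfrac12\Bigr)
\;\leq\; 2(n-1)\exp\!\left(-\,\frac{m/8}{V + L/6}\right)
\;=\; 2(n-1)\exp\!\left(-\,\frac{c'\,m}{n\log n}\right),
\]
not $2\exp(-\gamma m/\log n)$. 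For $m = c\,n\log n$ this is $2(n-1)e^{-c'c}$, which grows with $n$ and is vacuous; you have dropped a factor of $n$ in the exponent's denominator. The culprit is the uniform bound $L$: on the event $E_i$ one has $\|\tilde\bfz_i\|_2^2 \leq 3n$, and this factor of $n$ enters the Bernstein denominator multiplicatively. The variance being $O(n)$ does not rescue this, because the sub-exponential term $Lt/3$ already dominates.

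The paper avoids this precisely by \emph{not} using matrix Bernstein. It fixes a unit vector $\bfu \perp \bfe_1$, applies the \emph{scalar} Bernstein inequality to $\eta_i = \langle \bfu, W_i \bfu\rangle$, and then takes a union bound over a $\tfrac14$-net of size $9^n$. The crucial gain is that for fixed $\bfu$ the quantity $\langle \bfz_i,\bfu\rangle^2$ is a $\chi^2_1$ variable with $n$-independent moments, so the per-sample variance is $O(1)$ (not $O(n)$) and the sub-exponential scale is $O(\log n)$ (not $O(n\log n)$). This yields a scalar tail $\exp(-\gamma' m/\log n)$; the net factor $9^n = e^{n\log 9}$ is then absorbed by $m \geq c\,n\log n$ since $\gamma' m/\log n \geq \gamma' c\,n$. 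The paper also splits $\W$ into three pieces $\Wzero,\Wone,\Wtwo$ so that each scalar $\eta_i$ factors into a bounded piece (from truncation) times a piece with clean moment growth, but the essential point is the scalar-plus-covering route rather than a direct matrix inequality.
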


\subsection{Proof of Lemma \ref{lemma-dual-certificate-t}:  $\Ybar$ on $T$}

We prove Lemma \ref{lemma-dual-certificate-t} in a way that parallels the corresponding proof in \cite{CSV2011}.  Observe that 
 $$\| \YbarT\|_1 \leq \sqrt{2}  \| \YbarT  \|_2 \leq 2 \| \YbarT \bfe_1\|_2,$$
where the first inequality follows because $\YbarT$ has rank at most 2, and the second inequality follows because $\YbarT$ can be nonzero only in its first row and column. We can write 
$$
\YbarT \bfe_1 = \frac{1}{m} \sum_{i=1}^m \ybar_i,
$$
where $\ybar_i =  \bfy_i 1_{E_i}$,  and $\bfy_i$ are independent samples of
$$
\bfy = \left [ \frac{3}{n+2} \|\bfz\|_2^2 - z_1^2 \right ] z_1 \bfz =:  \xi z_1 \bfz.
$$
To bound the $\ell_2$ norm of $\YbarT \bfe_1$, we use the Vector Bernstein inequality on $\ybar_i$.

\begin{theorem}[Vector Bernstein inequality]
Let $\bfx_i$ be a sequence of independent random vectors and set  $V \geq \sum_i \E\|\bfx_i\|_2^2$.  Then for all $t \leq V / \max \| \bfx_i \|_2$, we have
$$
\PP\left (\left \| \sum_i (\bfx_i - \E \bfx_i )  \right \|_2 \geq \sqrt{V} + t \right ) \leq e^{-t^2/4V}.
$$
\end{theorem}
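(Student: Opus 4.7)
The plan is to split $Z := \|\sum_i (\bfx_i - \E \bfx_i)\|_2$ as $Z = \E Z + (Z - \E Z)$ and bound each piece separately: first, show $\E Z \leq \sqrt{V}$; second, show $\PP(Z - \E Z \geq t) \leq e^{-t^2/(4V)}$ in the stated range $t \leq V/\max_i \|\bfx_i\|_2$.  Combining the two pieces gives
$$\PP(Z \geq \sqrt{V} + t) \;\leq\; \PP(Z \geq \E Z + t) \;\leq\; e^{-t^2/(4V)},$$
which is the claim.

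The expectation bound is immediate from Jensen's inequality and independence.  Writing $\bfy_i := \bfx_i - \E \bfx_i$,
$$\E Z \;\leq\; \sqrt{\E Z^2} \;=\; \sqrt{\textstyle\sum_i \E\|\bfy_i\|_2^2} \;\leq\; \sqrt{\textstyle\sum_i \E\|\bfx_i\|_2^2} \;\leq\; \sqrt{V},$$
where the middle equality uses $\E \<\bfy_i,\bfy_j\> = 0$ for $i \neq j$, and the inequality after it uses $\E\|\bfy_i\|_2^2 = \E\|\bfx_i\|_2^2 - \|\E\bfx_i\|_2^2 \leq \E\|\bfx_i\|_2^2$.

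The concentration step is where the real work lies, and I would invoke the vector-valued Bernstein inequality for sums of independent bounded random vectors in Hilbert space (Pinelis, 1994), which yields
$$\PP(Z - \E Z \geq t) \;\leq\; \exp\!\left(-\frac{t^2}{2V + 2Bt/3}\right), \qquad B := \max_i \|\bfx_i\|_2.$$
Restricting to $t \leq V/B$ forces $2Bt/3 \leq 2V/3$, so the denominator is at most $8V/3$ and the right side is bounded by $\exp(-3t^2/(8V)) \leq \exp(-t^2/(4V))$.  The underlying argument shows that $\cosh(\lambda \|S_k\|_2)$ is a submartingale along the partial sums $S_k = \sum_{j \leq k} \bfy_j$; this reduces to a pointwise inequality for $\cosh(\|a+b\|)$ in Hilbert space, after which one mimics the scalar Chernoff--Bernstein optimization over $\lambda$.

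The main obstacle is precisely this submartingale argument, since $\|S_k\|_2$ is not linear in the increments and the ordinary MGF $\E e^{\lambda \|S_k\|_2}$ does not factor across independent summands.  The essential technical input is that the convex, $1$-Lipschitz character of the Hilbert norm still permits a scalar-like MGF bound via $\cosh(\lambda\|\cdot\|_2)$; once that is in place, the rest of the argument is a mechanical Chernoff optimization and everything reduces to the expectation bound above.
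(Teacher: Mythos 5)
The paper does not prove this statement at all: the Vector Bernstein inequality is quoted as an imported tool (it is the form used in \cite{CSV2011} and originating in Gross's matrix-completion work), so there is no in-paper argument to compare against. Your proposal reconstructs the standard literature derivation, and its skeleton is correct: the decomposition into the mean bound $\E Z \leq \sqrt{V}$ plus a deviation-above-the-mean estimate is exactly how this theorem is obtained, your Jensen/orthogonality computation of $\E Z^2 = \sum_i \E\|\bfy_i\|_2^2 \leq V$ is complete and right, and the final arithmetic ($2V + 2Bt/3 \leq 8V/3$ on the range $t \leq V/B$, and $3/8 \geq 1/4$) closes the loop with room to spare.

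The one soft spot is that all of the probabilistic content is delegated to the cited concentration inequality, and you should be careful about which form you are invoking. Pinelis's theorems are most often quoted in the \emph{uncentered} form $\PP(\|S\|_2 \geq r) \leq 2\exp\bigl(-r^2/(2V + 2Br/3)\bigr)$; that version would \emph{not} suffice here, because for $t$ small compared to $\sqrt{V}$ the right-hand side at $r = \sqrt{V}+t$ exceeds $1$ while the target $e^{-t^2/4V}$ does not. You genuinely need the version centered at $\E Z$, i.e.\ $\PP(Z \geq \E Z + t) \leq \exp\bigl(-t^2/(2V+2Bt/3)\bigr)$, which does hold for Hilbert-space norms of sums of bounded independent vectors (via the $\cosh$-supermartingale/2-smoothness argument you sketch, or via Talagrand-type concentration), but it is not the headline statement of Pinelis (1994), so the citation should be pinned down or the supermartingale step actually carried out. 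A second, minor point: the centered increments satisfy only $\|\bfy_i\|_2 \leq 2\max_i\|\bfx_i\|_2$, so $B$ should be $2\max_i\|\bfx_i\|_2$ in the denominator; this degrades $3/8$ to $3/10$ in the exponent, which is still $\geq 1/4$, so the conclusion survives. With the concentration input properly sourced, your proof is correct and is arguably more self-contained than the paper, which treats the whole theorem as a black box.
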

 
\noindent In order to apply this inequality, we need to compute $\max \| \ybar \|_2$, $\E \ybar$, and $ \E \| \ybar \|_2$, where $\ybar = \bfy 1_E$.

First, we compute $\max \| \ybar \|_2$.  On the event $E$, $|z_1| \leq \sqrt{2 \beta \log n}$ and $\|\bfz\|_2 \leq \sqrt{3 n}$.  If $n$ is large enough that $2 \beta \log n \geq 9$, then $| \xi | \leq 2 \beta \log n$. Thus, 
$$
\| \ybar \|_2 \leq \sqrt{24 n} (\beta \log n)^{3/2}
$$
for sufficiently large $n$.

Second, we find an upper bound for $\E \ybar$.  Note that $\E y_1 = 0$ because
\begin{align*}
\E[z_1^4] &=3,\\
\E [z_1^2 \|\bfz\|_2^2] &= n+2.
\end{align*}
By symmetry, every entry of $\ybar$ has zero mean except the first.  Hence, 
$$
\| \E \ybar \|_2 = | \E \bar{y}_1 | = | \E (y_1 - y_1 1_{E^c} ) | = | \E y_1 1_{E^c} | \leq \sqrt{\PP (E^c) } \sqrt{\E y_1^2} = \sqrt{\pi(\beta)} \sqrt{ \E y_1^2}.
$$
Computing,
$$
y_1^2 = (\xi z_1^2)^2 =  z_1^8 - \frac{6}{n+2} z_1^6 \|z\|_2^2 + \frac{9}{(n+2)^2} z_1^4 \|z\|_2^4,
$$
we find 
$$
\E y_1^2 \leq 44,
$$
where we have used
\begin{alignat}{2}
&\E[ z_1^8] &&= 105, \\
&\E[z_1^6 \| \bfz \|_2^2 ] &&= 15n + 90, \label{ez16z22}\\
&\E[z_1^4 \| \bfz \|_2^4 ] &&= 3n^2 + 30 n + 72. \label{ez14z24}
\end{alignat}
Thus, 
\begin{align}
\| \E \ybar \|_2 \leq \sqrt{44(n^{-\beta} + e^{-n/3})}. \label{exp-ybar-2norm}
\end{align}

Third, we find an upper bound for $\E \| \ybar \|_2^2$.  Because $\| \ybar \|_2^2 \leq \| \bfy \|_2^2$, we write out
\begin{align*}
\| \bfy \|_2^2 &= \xi^2 z_1^2  \| \bfz \|_2^2  =
 z_1^6 \| \bfz \|_2^2 - \frac{6}{n+2} z_1^4 \|\bfz \|_2^4 + \frac{9}{(n+2)^2} z_1^2 \|\bfz\|_2^6. 
\end{align*}
Hence, 
\begin{align}
\E [ \| \bfy \|_2^2 ] &= (15 n + 90) - \frac{6}{n+2} (3n^2 + 30 n + 72) + \frac{9}{(n+2)^2} (n+2)(n+4)(n+6) \\
&\leq 8n+16,
\end{align}
where we have used \eqref{ez16z22}, \eqref{ez14z24}, and
\begin{alignat}{2}
&\E[z_1^2 \| \bfz \|_2^6 ] &&= (n+2)(n+4)(n+6).
\end{alignat}
Applying the vector Bernstein inequality with $V=m(8n+16)$, we have that for all $t \leq (8n+16) / [ \sqrt{24n}(\beta \log n)^{3/2}]$, 
$$
\PP \left( \frac{1}{m} \left  \| \sum_i \ybar_i - \E \ybar_i \right \|_2 \geq \sqrt{\frac{8n+16}{m}} + t \right) \leq \exp \left(- \frac{mt^2}{4(8n+16)} \right ).
$$
Using the triangle inequality and \eqref{exp-ybar-2norm}, we get
$$
\PP \left( \frac{1}{m} \left  \| \sum_i \ybar_i \right \|_2 \geq \sqrt{44 (n^{-\beta} + e^{-n/3}} ) + \sqrt{\frac{8n+16}{m}} + t \right) \leq \exp \left(- \frac{mt^2}{4(8n+16)} \right ).
$$
Lemma \ref{lemma-dual-certificate-t} follows by choosing $t, \beta$, and $m \geq c n$ where $n$ and $c$ are large enough that 
$$
 \sqrt{44 (n^{-\beta} + e^{-n/3}} ) + \sqrt{\frac{8n+16}{m}} + t \leq \frac{1}{4}.
$$

\subsection{Proof of Lemma \ref{lemma-dual-certificate-tperp}: $\Ybar$ on $T^\perp$}

We prove Lemma \ref{lemma-dual-certificate-tperp} in a way that parallels the corresponding proof in \cite{CSV2011}.  We write
$$\YbarTp - 2 \ITp = \frac{1}{m} \sum_i  (\W_i 1_{E_i} -2 \ITp 1_{E_i^c}),
$$
where $\W_i$ are independent samples of 
\begin{align}
\W = \left [ \frac{3}{n+2} \|\bfz\|_2^2 - z_1^2 \right ] \PTp ( \zz) - 2 \ITp.
\end{align}
We decompose $\W$ into the three terms
\begin{align}
\W &\phantom{:}= -\left[z_1^2 -1  \right] \PTp(\zz) + 3 \left[  \frac{1}{n+2} \|\bfz\|_2^2 - 1 \right] \PTp(\zz) + 2 (\PTp{\zz} - \ITp)\\
&:= \Wzero + \Wone + \Wtwo.
\end{align}
Letting $\Wkbar_i = \Wk_i 1_{E_i}$, it suffices to show
that with high probability
\begin{align}
\frac{1}{m}\left \| \sum_i 2\ITp 1_{E_i^c}  \right \| \leq \frac{1}{4} \text{ and, }
\frac{1}{m}\left \|\sum_i \bar{\W}_i^{(k)}    \right \| \leq \frac{1}{4} \text{ for $k=0, 1, 2.$} 
\end{align}

\subsubsection{Bound on $\ITp 1_{\Eic}$}

We show that $m^{-1} \| \sum_i \ITp 1_{\Eic}\| =  m^{-1} \sum_i 1_{\Eic} $ is small with probability at least $1 - 2 e^{-\gamma m}$ for some constant $\gamma > 0$.  To do this, we use the scalar Bernstein inequality.

\begin{theorem}[Bernstein inequality]
Let $\{ X_i \}$ be a finite sequence of independent random variables.  Suppose that there exists $V$ and $c$ such that for all $X_i$ and all $k\geq 3$,
$$
\sum_i \E |X_i|^k \leq \frac{1}{2} k! V c_0^{k-2}.
$$
Then for all $t\geq 0$,
\begin{align}
\PP \left( \left | \sum_i X_i - \E X_i \right | \geq t \right) \leq 2 \exp \left( - \frac{t^2}{2V + 2 c_0 t} \right ). \label{bernstein-scalar}
\end{align}
\end{theorem}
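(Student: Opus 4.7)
The plan is to apply the classical Chernoff–Cramér exponential moment method, exactly as in the textbook derivation of Bennett's and Bernstein's inequalities. By symmetry (applying the same argument with $\{X_i\}$ replaced by $\{-X_i\}$ and taking a union bound), it suffices to establish the one-sided tail bound $\PP(\sum_i (X_i - \E X_i) \geq t) \leq \exp(-t^2/(2V + 2c_0 t))$; the factor of $2$ in \eqref{bernstein-scalar} then arises from symmetrization.

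Set $Y_i = X_i - \E X_i$, so $\E Y_i = 0$ and $\E|Y_i|^k \leq 2^k \E|X_i|^k$ by the triangle inequality and Jensen's inequality (any resulting constant factor is absorbed into $V$ and $c_0$). For any $\lambda \in (0, 1/c_0)$, Markov's inequality applied to the exponential moment and independence give
\[
\PP\Bigl(\sum_i Y_i \geq t\Bigr) \leq e^{-\lambda t} \prod_i \E e^{\lambda Y_i}.
\]
Taking logarithms, using $\log(1+x) \leq x$, and expanding $\E e^{\lambda Y_i} = 1 + \sum_{k \geq 2} \tfrac{\lambda^k}{k!} \E Y_i^k$, we obtain
\[
\log \prod_i \E e^{\lambda Y_i} \;\leq\; \sum_{k \geq 2} \frac{\lambda^k}{k!} \sum_i \E|Y_i|^k \;\leq\; \frac{V \lambda^2}{2} \sum_{k \geq 2} (\lambda c_0)^{k-2} \;=\; \frac{V \lambda^2}{2(1 - \lambda c_0)},
\]
where the geometric series converges because $\lambda c_0 < 1$. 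This is the key analytic step: the moment hypothesis of the theorem is engineered precisely so that the factorials cancel and the tail of the Taylor series collapses into a clean closed-form.

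Finally, I would optimize the Chernoff exponent by choosing $\lambda = t/(V + c_0 t)$, which lies in $(0, 1/c_0)$. Direct substitution gives $1 - \lambda c_0 = V/(V+c_0 t)$, and the exponent reduces to
\[
-\lambda t + \frac{V\lambda^2}{2(1-\lambda c_0)} \;=\; -\frac{t^2}{V+c_0 t} + \frac{t^2}{2(V+c_0 t)} \;=\; -\frac{t^2}{2(V+c_0 t)},
\]
producing the stated one-sided bound. Combining with the analogous bound for $-\sum_i Y_i$ yields the factor of $2$ in \eqref{bernstein-scalar}.

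The main obstacle is essentially bookkeeping: the theorem's hypothesis is phrased in terms of $\E|X_i|^k$ rather than $\E|Y_i|^k$, and one must verify that centering inflates the constants $V$ and $c_0$ only by universal factors, so the form of the tail bound is preserved. Beyond Markov's inequality, independence, and the inequality $\log(1+x) \leq x$, no additional probabilistic machinery is needed. The resulting bound naturally interpolates between a subgaussian regime $\exp(-t^2/(4V))$ for small $t$ and a subexponential regime $\exp(-t/(4c_0))$ for large $t$, which is the source of the $2V + 2c_0 t$ denominator.
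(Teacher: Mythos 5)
The paper offers no proof of this statement: it is quoted verbatim from the PhaseLift paper \cite{CSV2011} as a classical tool (Bernstein's inequality in the Bennett--Uspensky form), so there is no in-paper argument to compare against; your proposal must stand on its own. Your Chernoff/MGF skeleton is indeed the standard route, and the optimization $\lambda = t/(V+c_0 t)$, with $1-\lambda c_0 = V/(V+c_0 t)$, is carried out correctly. But as written there are two genuine gaps. First, the centering step: bounding $\E|Y_i|^k \leq 2^k \E|X_i|^k$ and ``absorbing'' the factor into the constants replaces $(V,c_0)$ by $(4V,2c_0)$, so the computation delivers $\exp\left(-t^2/(8V+4c_0 t)\right)$ rather than the stated $\exp\left(-t^2/(2V+2c_0 t)\right)$. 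The constants are not preserved, and the theorem claims specific constants. The standard repair is to never take absolute moments of the centered variable: write $\E e^{\lambda(X_i-\E X_i)} = e^{-\lambda \E X_i}\left(1+\lambda \E X_i+\sum_{k\geq 2}\tfrac{\lambda^k}{k!}\E X_i^k\right)$ and use $1+u\leq e^u$ to conclude $\E e^{\lambda(X_i-\E X_i)} \leq \exp\left(\sum_{k\geq 2}\tfrac{\lambda^k}{k!}\E|X_i|^k\right)$, after which the hypothesis applies with the original $V$ and $c_0$.

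Second, your sum $\sum_{k\geq 2}$ uses the moment bound at $k=2$, i.e.\ $\sum_i \E X_i^2 \leq V$, which the hypothesis as stated (only $k\geq 3$) does not supply; the fix above needs it too. This is not a removable technicality: with the condition imposed only for $k\geq 3$ one may take $V$ strictly below the total variance (for $X_i=\pm\sigma$ symmetric, $V=n\sigma^2/3$ and $c_0=\sigma$ satisfy the $k\geq3$ conditions), and the claimed tail at $t=2\sigma\sqrt{n}$ would then be about $2e^{-6}$, contradicting the central limit theorem, which gives probability about $0.046$. So the correct statement is the classical one with $k\geq 2$ (equivalently, with the extra hypothesis $\sum_i\E X_i^2\leq V$), which is how the inequality is actually applied elsewhere in the paper; your proof should invoke that version explicitly. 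With these two repairs your argument becomes the standard complete proof.
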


\noindent Observing that $\E | 1_{\Eic}|^k = \E 1_{\Eic} = \pi(\beta)$, we apply the Bernstein inequality with $V = \pi(\beta) m$ and $c_0 = 1/3$.  Thus,
$$
\PP \left(  \left |\frac{1}{m} \sum_i  1_{\Eic} - \pi(\beta) \right |  \geq t \right ) \leq 2 \exp\left( - \frac{m t^2}{2 \pi(\beta) + 2 t /3}\right).
$$
Using the triangle inequality and taking  $t$ and $\beta$ such that $\pi(\beta) + t \leq 1/8$ for sufficiently large $n$, we get 
$$
\PP \left(  \left |\frac{1}{m} \sum_i  1_{\Eic}  \right |  \geq \frac{1}{8} \right ) \leq 2 \exp\left( - \gamma m \right)
$$
for a $\gamma > 0$.

\subsubsection{Bound on $\Wzerobar$}
We show $m^{-1} \|\sum_i \Xzerobar\|$ is small with probability at least $1 - 2 \exp( -\gamma/\log n )$.  We write this norm as a supremum over all unit vector perpendicular to $\bfe_1$:
\begin{align}
\left \|\sum_i \Wzerobar \right \| = \sup_{\bfu \perp \bfe_1, \|\bfu\|=1} \left | \sum_i \< \bfu, \Wzerobar_i \bfu \> \right|, \label{w0-norm-sup} 
\end{align}
To control the supremum, we follow the same reasoning as in \cite{CSV2011}.  We bound $ \sum_i \< \bfu, \Wzerobar_i \bfu \>$ for fixed $\bfu$ and apply a covering argument over the  sphere of $\bfu$'s.  We write
$$
 \sum_i \< \bfu, \Xzerobar_i \bfu \> = \sum_i \eta_i 1_{E_i},
$$
where $\eta_i$ are independent samples of 
$$
\eta = -\left[ z_1^2  -1  \right ] \<\bfz, \bfu \>^2.
$$
To apply the scalar Bernstein inequality, we compute  $\E | \eta 1_E |^k$.  Because $\bfu \perp \bfe_1$, $z_1$ and $\< \bfz, \bfu \>$ are independent.  Hence,
$$
\E | \eta 1_E|^k \leq \E | (z_1^2 - 1) 1_{E }|^k \E | \< \bfz, \bfu \> |^{2k}.
$$
Bounding the first factor, we get
$$
\E |(z_1^2 - 1) 1_E|^k  = \E |(z_1^2 - 1)^{k-2} 1_E  (z_1^2 - 1)^{2} | \leq (2 \beta \log n)^{k-2} \E(z_1^2 -1)^2 = 2 (2 \beta \log n)^{k-2}.
$$
Observing that $\< \bfz, \bfu\>$ is  a chi-squared variable with one degree of freedom, we have
$$
\E | \<\bfz, \bfu \> |^{2k} = 1 \times 3 \times \ldots \times (2k-1) \leq 2^k k!
$$
Applying the scalar Bernstein inequality with $V=16m$ and $c_0 = 4 \beta \log n$, we get
$$
\PP \left( \frac{1}{m} \left |\sum_i \eta_i 1_{E_i} - \E[ \eta_i 1_{E_i}]   \right | \geq t \right) \leq 2 \exp \left( - \frac{m t^2}{2(16+4 \beta t \log n)} \right ).
$$
Because $\E \eta_i = 0$, we get
$$
| \E \eta_i 1_{E_i} | = | \E \eta_i 1_{\Eic} | \leq \sqrt{\PP(\Eic)} \sqrt{ \E \eta_i^2} = 2\sqrt{ \pi(\beta)},
$$
where we have used $\E (1-z_1^2)^2 = 2$, and $\E|\< \bfz, \bfu \> |^4 = 3$.  Hence, 
$$
\PP \left( \frac{1}{m} \left |\sum_i \eta_i 1_{E_i}  \right | \geq t + 2\sqrt{ \pi(\beta)} \right) \leq 2 \exp \left( - \frac{m t^2}{2(16+4 \beta t \log n)} \right ).
$$
Taking $t, \beta, m\geq c_1 n$ with $n$ large enough so that $t + 2 \sqrt{\pi(\beta)} \leq 1/8$, we have
$$
\PP \left( \frac{1}{m} \left |\sum_i \eta_i 1_{E_i}  \right | \geq 1/8 \right) \leq 2 \exp \left(- \gamma' \frac{m}{\log n} \right ),
$$
for some $\gamma'>0$.  To complete the bound on \eqref{w0-norm-sup}, we use Lemma 4 in \cite{Ver2010}:
$$
\sup_\bfu \left | \<  \bfu, \Wzerobar \bfu \>   \right | \leq 2 \sup_{\bfu \in \mathcal{N}_{1/4}} \left |  \<  \bfu, \Wzerobar \bfu \>  \right |,
$$
where $\mathcal{N}_{1/4}$ is a 1/4-net of the unit sphere of vectors $\bfu \perp \bfe_1$.  As $| \mathcal{N}_{1/4}| \leq 9^n$, a union bound gives
$$
\PP \left( \frac{1}{m} \left |\sum_i \eta_i 1_{E_i}  \right | \geq 1/8 \right) \leq 9^n \cdot 2 \exp \left(- \gamma' \frac{m}{\log n} \right ).$$
Hence, 
$$
\PP \left( \frac{1}{m} \left \|\sum_i \Wzerobar \right \| \geq \frac{1}{4} \right) \leq 2 \exp \left(- \gamma m /\log n \right ) 
$$
for some $\gamma>0$.
\subsubsection{Bounds on $\Wonebar$ and $\Wtwobar$}
The bound for the $\|\sum_i \Wonebar\|$ term is similar.  We  write
$$
 \sum_i \< \bfu, \Wonebar_i \bfu \> = \sum_i \eta_i 1_{E_i},
$$
where $\eta_i$ are independent samples of 
$$
\eta =  3 \left[ \frac{\|\bfz\|_2^2}{n+2} -1  \right ] \<\bfz, \bfu \>^2.
$$
We can bound $\E | \eta_i 1_E |^k \leq 12^k k!$ because $\|\bfz \|_2^2 \leq 3n$ on $E$.  Applying the scalar Bernstein inequality with $c_0 = 12$ and $V = 288 m$ gives
$$
\PP \left( \frac{1}{m} \left |\sum_i \eta_i 1_{E_i} - \E[ \eta_i 1_{E_i}]   \right | \geq t \right) \leq 2 \exp \left( - \frac{m t^2}{2(288+ 12 t)} \right ).
$$
The rest of the bound is similar to that of $\|\sum_i \Xzerobar\|$  above.

Finally, we also bound  $\|\sum_i \Wtwobar\|$ similarly.  We  write
$$
 \sum_i \< \bfu, \Wtwobar_i \bfu \> = \sum_i \eta_i 1_{E_i},
$$
where $\eta_i$ are independent samples of 
$$
\eta = 2\<\bfz, \bfu \>^2 - 2.
$$
Observing that 
$$
\E |\eta_i 1_E|^k \leq 4^k k!,
$$
we apply the scalar Bernstein inequality with $c_0 = 4$ and $V = 32 m$, giving
$$
\PP \left( \frac{1}{m} \left |\sum_i \eta_i 1_{E_i} - \E[ \eta_i 1_{E_i}]   \right | \geq t \right) \leq 2 \exp \left( - \frac{m t^2}{2(32+ 4 t)} \right ).
$$
The rest of the bound is as above.

\section{Stability}
\label{section-stability}
We now prove Theorem \ref{theorem-noisy}, establishing the stability of the matrix recovery problem \eqref{noisy-axb-lifted}.  We also prove Corollary \ref{corollary-noisy}, establishing the stability of the vector recovery problem \eqref{noisy-axb}.
As in the exact case, the proof of Theorem \ref{theorem-noisy} hinges on the $\ell_1$-isometry properties \eqref{l1-isometry-psd}--\eqref{l1-isometry-t} and the existence of an inexact dual certificate satisfying \eqref{inexact-dual-conditions}.   For stability, we use the  additional property that $\Y = \A^* \lambda$ for a  $\lambda$ controlled in $\ell_2$.
It suffices to establish an analogue of Lemma \ref{lemma-exact-dual-certificate} along with a bound on $\|\lambda\|_2$.

\begin{lemma}
\label{lemma-noisy-dual-certificate}
Suppose that $\A$ satisfies   \eqref{l1-isometry-psd} -- \eqref{l1-isometry-t} and  there exists $\Y = \A^* \lambda$ satisfying \eqref{inexact-dual-conditions} and $\|\lambda\|_1 \leq 5$.
Then, 
$$\X \matrixgeq 0 \text{ and } \| \A(\X) - \bfb \|_2 \leq \eps  \|\X_0\|_2 \Longrightarrow 
\| \X - \X_0\|_2 \leq C \eps  \|\X_0\|_2,
$$
for some $C>0$.
\end{lemma}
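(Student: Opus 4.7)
The plan is to mimic the proof of Lemma \ref{lemma-exact-dual-certificate}, tracking the error introduced by the fact that $\A(\bfH)$ is no longer zero when $\bfH := \X - \X_0$. Two ingredients of the exact argument used $\A(\bfH) = 0$: the vanishing of $\langle \bfH, \Y \rangle$ (since $\Y \in \mathcal{R}(\A^*)$), and the cone condition \eqref{cone-condition}. Both acquire additive $O(\eps \|\X_0\|_2)$ error terms in the noisy regime; the hypothesis $\|\lambda\|_1 \leq 5$ is precisely what controls the first one.

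First, since $\A(\X_0) = \bfb$ and $\|\A(\X) - \bfb\|_2 \leq \eps \|\X_0\|_2$, the triangle inequality gives $\|\A(\bfH)\|_2 \leq \eps \|\X_0\|_2$. Writing $\Y = \A^*\lambda$ and applying Cauchy--Schwarz together with $\|\lambda\|_2 \leq \|\lambda\|_1 \leq 5$,
\[
|\langle \bfH, \Y \rangle| = |\langle \A(\bfH), \lambda \rangle| \leq \|\A(\bfH)\|_2 \, \|\lambda\|_2 \leq 5\eps\|\X_0\|_2.
\]
Splitting $\langle \bfH, \Y \rangle = \langle \HT, \YT \rangle + \langle \HTp, \YTp \rangle$ and bounding each piece exactly as in Lemma \ref{lemma-exact-dual-certificate} --- using $\HTp \succeq 0$ together with $\YTp \succeq \ITp$ for the $T^\perp$ term, and spectral/nuclear duality with $\|\YT\|_1 \leq 1/2$ for the $T$ term --- yields $\|\HTp\|_1 - \tfrac{1}{2}\|\HT\| \leq 5\eps\|\X_0\|_2$.

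Next, I redo the cone estimate while retaining the noise residual. Using \eqref{l1-isometry-psd}--\eqref{l1-isometry-t} along with $m^{-1}\|\A(\bfH)\|_1 \leq m^{-1/2}\|\A(\bfH)\|_2 \leq m^{-1/2}\eps\|\X_0\|_2$,
\[
0.94(1-\delta)\|\HT\| \leq m^{-1}\|\A(\HT)\|_1 \leq m^{-1}\|\A(\HTp)\|_1 + m^{-1}\|\A(\bfH)\|_1 \leq (1+\delta)\|\HTp\|_1 + m^{-1/2}\eps\|\X_0\|_2.
\]
Substituting the previous display eliminates $\|\HTp\|_1$ and leaves $[0.94(1-\delta)/(1+\delta) - 1/2]\|\HT\| \leq C_0 \eps\|\X_0\|_2$. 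The left-hand coefficient is positive for $\delta \leq 1/9$ (the same positivity that drove the exact proof), so $\|\HT\| \leq C_1\eps\|\X_0\|_2$ and hence $\|\HTp\|_1 \leq C_2 \eps\|\X_0\|_2$. Since $\HT$ has rank at most $2$, $\|\HT\|_2 \leq \sqrt{2}\|\HT\|$; combined with $\|\HTp\|_2 \leq \|\HTp\|_1$, this delivers the Frobenius bound $\|\bfH\|_2 \leq C\eps\|\X_0\|_2$.

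The main obstacle is not a new conceptual idea beyond the exact case but rather the bookkeeping of ensuring that both perturbations --- to $\langle \bfH, \Y \rangle$ and to the cone condition --- are of the same order $\eps\|\X_0\|_2$. The $m^{-1/2}$ factor attached to the cone perturbation is a harmless improvement in the regime $m \geq cn\log n$ and can be absorbed into the universal constant $C$.
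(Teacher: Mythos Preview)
Your argument is correct and mirrors the paper's proof almost exactly: bound $|\langle \bfH, \Y\rangle|$ via the adjoint relation and $\|\lambda\|_1 \leq 5$, redo the cone estimate \eqref{cone-condition} with an additive $m^{-1/2}\eps$ residual, solve the resulting pair of inequalities for $\|\HT\|$ and $\|\HTp\|_1$, and convert to the Frobenius norm using that $\HT$ has rank at most $2$. One small slip: in the paper's noisy model $\bfb = \A(\X_0) + \nu$ with $\|\nu\|_2 \leq \eps$, so $\A(\X_0) \neq \bfb$ in general and the paper obtains $\|\A(\bfH)\|_2 \leq 2\eps$ (hence $10\eps$ in place of your $5\eps$); this affects only the absorbed constant $C$.
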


\begin{proof}[Proof of Lemma \ref{lemma-noisy-dual-certificate}]
As before, we take $\bfx_0 = \bfe_1$ and $\X_0 = \ee$ without loss of generality.    
Consider any $\X \matrixgeq 0$ such that $\|\A(\X) - \bfb\|_2\leq \eps $, and let $\bfH = \X - \X_0$.  Whereas $\A(\bfH) = 0$ in the noiseless case, it is now of order $\eps$ because
\begin{align}
\|\A (\bfH) \|_2 \leq \| \A(\X - \bfb) \|_2 + \| \A(\X_0 - \bfb)\|_2 \leq 2 \eps. \label{AH-small}
\end{align}

\noindent Similarly, $| \<\bfH, \Y \>|$ is  also of order $\eps$ because
\begin{align}
| \<\bfH, \Y \>| &= | \< \A(\bfH), \lambda \> \| \leq \| \A (\bfH) \|_\infty \ \|\lambda \|_1 \leq \| \A (\bfH) \|_2 \ \|\lambda \|_1 \nonumber \leq 10 \eps.
\end{align}
Analogous to the proof of Lemma \ref{lemma-exact-dual-certificate}, we use \eqref{inexact-dual-conditions} to compute that
\begin{align}
10 \eps &\geq \< \bfH, \Y \> \geq \| \HTp \|_1 - \frac{1}{2} \|\HT\|. \label{upper-bound-HY}
\end{align}
Using the $\ell_1$-isometry properties \eqref{l1-isometry-psd} -- \eqref{l1-isometry-t}, we have
\begin{align}
0.94(1-\delta) \| \HT \| \leq m^{-1} \| \A (\HT) \|_1 &\leq m^{-1} \| \A (\bfH) \|_1 + m^{-1} \| \A (\HTp) \|_1 \nonumber \\
&\leq m^{-1/2} \| \A (\bfH) \|_2 + (1+ \delta) \| \HTp \|_1 \nonumber \\
&\leq 2 \eps m^{-1/2} + (1+\delta) \| \HTp \|_1 \label{lowerbound-HT}.
\end{align}
Thus \eqref{upper-bound-HY} becomes
\begin{align}
\left( 10 + \frac{m^{-1/2}}{0.94(1-\delta)}  \right) \eps &\geq \left( 1 - \frac{1+\delta}{2 \cdot 0.94 ( 1-\delta)} \right) \| \HTp \|_1,
\end{align}
which, along with \eqref{lowerbound-HT},  implies 
\begin{align}
\|\HTp \|_1 &\leq C_0 \eps \text{ and } \|\HT \| \leq  C_1 \eps
\end{align}
for some $C_0,C_1 > 0$.  Recalling that $\HT$ has rank at most 2,
$$
\|\bfH \|_2 \leq \|\HT\|_2 + \| \HTp \|_2 \leq \sqrt{2} \|\HT\| + \| \HTp\|_1 \leq(\sqrt{2} C_1 + C_0 ) \eps \leq C \eps.
$$

\end{proof}

\subsection{Dual Certificate Property}
It remains to show $\| \lambda \|_1 \leq 5$ for $\Ybar = \A^* \lambda$.  From \eqref{defn-ybar-second}, we identify $\lambda  = m^{-1} (\mathbf{1}_{E} \circ \A \Sinv 2(\I-\ee))$. 
 Computing, 
\begin{align}
\|\lambda\|_1 &= m^{-1}  \| \oneE \circ  \A \Sinv 2(\I - \ee) \|_1  \nonumber \\
&\leq m^{-1}  \| \A \Sinv 2(\I - \ee) \|_1  \nonumber \\
&\leq m^{-1}  \left \|\A \left ( \frac{3}{n+2} \I \right) - \A \left( \ee \right)  \right \|_1 \label{HY-substitute-Sinv}\\
&\leq (1
+\delta)  \left(\left \| \frac{3}{n+2} \I \right\|_1 + \left \|  \ee \right \|_1 \right) \label{HY-use-isometry}\\
&\leq 4 (1+\delta),   \nonumber
\end{align}
where \eqref{HY-substitute-Sinv} follows from \eqref{s-inv-definition}, and \eqref{HY-use-isometry} follows from  the triangle inequality and the $\ell_1$-isometry property \eqref{l1-isometry-psd}.  Hence $\|\lambda\|_1 \leq 5$.
\subsection{Proof of Corollary \ref{corollary-noisy}}

Now we  prove Corollary \ref{corollary-noisy},  showing that stability of the lifted problem \eqref{noisy-axb-lifted} implies stability of the unlifted problem \eqref{noisy-axb}.  As before, we take $\bfx_0= \bfe_1$ without loss of generality.  Hence $\|\X_0\|_2 = 1$.  
Lemma \ref{lemma-noisy-dual-certificate} establishes that $\|\X - \X_0 \| \leq C_0 \eps$. 
Recall that $\X_0 =\xnotxnotstar$.   Decompose $X = \sum_j \lambda_j \bfv_j \bfv_j^t$ with unit-normalized eigenvectors $\bfv_j$ sorted by decreasing eigenvalue.  By Weyl's perturbation theorem,
\begin{align}
\max \left \{ |1-\lambda_1 |, |\lambda_2|, \ldots, |\lambda_n| \right \} \leq C_0 \eps. \label{weyl}
\end{align}
Writing 
\begin{align}
\X_0 - \vv = (\X_0 - \X) + \left( (\lambda_1 -1) \vv + \sum_{j=2}^m \lambda_j \bfv_j \bfv_j^* \right),
\end{align}
we use the triangle inequality to form the spectral bound
$$
\| \X_0 - \vv \| \leq 2 C_0 \eps.
$$
Noting that
$$
1 -| \< \bfx_0,\bfv \>|^2 = \frac{1}{2}\| \X_0 - \vv \|_2^2 \leq \| \X_0 - \vv \|^2 \leq 4 C_0^2 \eps^2,
$$
we conclude 
$$
\| \bfx_0 - \bfv \|_2^2 = 2 - 2 \< \bfx_0, \bfv \> \leq 8 C_0^2 \eps^2.
$$

\section{Complex Case}
\label{section:complex-case}
The proof of Theorems \ref{theorem-exact-lifted} and \ref{theorem-noisy} are analogous to the complex-valued cases.  There are a few minor differences, as outlined and proved in \cite{CSV2011}.  The sensing vectors are assumed to be of the form $\Re \bfz_i \sim \mathcal{N}(0, \I)$ and $\Im \bfz_i \sim \mathcal{N}(0, \I)$.
The $\ell_1$-isometry conditions for complex $\A$ have weaker constants.  Lemma \ref{lemma-exact-dual-certificate} becomes
\begin{lemma}
\label{lemma-exact-dual-certificate-complex}
Suppose that $\A$ satisfies 
\begin{alignat}{2}
m^{-1} \| \A(\X) \|_1 &\leq (1+ \delta) \|\X\|_1 \quad &&\text{ for all } \X \matrixgeq 0, \nonumber\\
m^{-1} \| \A(\X) \|_1 &\geq 0.828(1 - \delta) \|\X\| \quad &&\text{ for all } \X \in T, \nonumber 
\end{alignat}
for some $\delta \leq 3/13$.  Suppose that there exists $\Ybar \in \mathcal{R} (\A^*)$ satisfying
\begin{align}
\|\YbarT\|_1 \leq 1/2 \quad \text{ and } \quad \YbarTp \matrixgeq \ITp. \nonumber
\end{align}
Then, $\X_0$ is the unique solution to \eqref{exact-axb-lifted}.
\end{lemma}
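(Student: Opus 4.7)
The plan is to mirror the proof of Lemma \ref{lemma-exact-dual-certificate} step by step, replacing the constant $0.94$ by $0.828$ and verifying that the closing arithmetic remains strictly favorable. All structural manipulations --- projection onto $T$ and $T^\perp$ in the Hermitian setting, the Hilbert--Schmidt splitting, nuclear-spectral duality, and the fact that the trace inner product of two positive matrices is nonnegative --- carry over to the complex case unchanged, so no inequality needs to be re-derived.

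First, I would set $\bfH = \X - \X_0$ for an arbitrary feasible $\X \matrixgeq 0$ with $\A(\X) = \bfb$. Since $\A(\bfH) = 0$ gives $\A(\HT) = -\A(\HTp)$, applying the two complex $\ell_1$-isometry hypotheses to $\HT \in T$ and to the positive matrix $\HTp$ respectively yields the chain
$$0.828(1-\delta)\|\HT\| \leq m^{-1}\|\A(\HT)\|_1 = m^{-1}\|\A(\HTp)\|_1 \leq (1+\delta)\|\HTp\|_1,$$
whence the complex cone condition $\|\HTp\|_1 \geq \frac{0.828(1-\delta)}{1+\delta}\|\HT\|$.

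Next, using $\Ybar \in \mathcal{R}(\A^*)$ and $\A(\bfH) = 0$, the pairing $\<\bfH,\Ybar\> = \<\HT,\YbarT\> + \<\HTp,\YbarTp\>$ vanishes. Nuclear-spectral duality together with $\|\YbarT\|_1 \leq 1/2$ gives $|\<\HT,\YbarT\>| \leq \frac{1}{2}\|\HT\|$, and the positive semidefiniteness of both $\HTp$ and $\YbarTp - \ITp$ yields $\<\HTp,\YbarTp\> \geq \tr(\HTp) = \|\HTp\|_1$. Combining with the cone bound produces
$$0 \geq \left(\frac{0.828(1-\delta)}{1+\delta} - \frac{1}{2}\right)\|\HT\|.$$

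The only genuinely new content --- and the sole potential obstacle --- is checking that this coefficient is strictly positive throughout the allowed range $\delta \leq 3/13$. Since the expression is monotone decreasing in $\delta$, it suffices to evaluate at the extremum $\delta = 3/13$, where $(1-\delta)/(1+\delta) = 5/8$ and the coefficient equals $0.828 \cdot 5/8 - 1/2 = 0.0175 > 0$. The margin is considerably tighter than in the real case, which is precisely why the admissible $\delta$ shrinks from $1/9$ to $3/13$. From positivity we conclude $\HT = 0$, and $\HTp = 0$ then follows from $0 \geq \|\HTp\|_1$, giving $\X = \X_0$.
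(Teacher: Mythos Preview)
Your proof is correct and follows exactly the approach the paper intends; indeed the paper simply states that ``the proof of this lemma is identical to the real-valued case,'' and you have carried out that verification, including the key arithmetic check at $\delta = 3/13$. One small slip in your commentary: $3/13 > 1/9$, so the allowed $\delta$ actually \emph{grows} rather than shrinks in passing to the complex statement --- the paper's thresholds $1/9$ and $3/13$ are both comfortably inside the respective positivity ranges, and the particular values are chosen for compatibility with the probabilistic isometry lemmas rather than being forced by this argument.
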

\noindent The proof of this lemma is identical to the real-valued case.  The conditions of the lemma are satisfied with high probability, as before.  

The construction of the inexact dual certificate is slightly different because $\mathcal{S}(\X) = \X + \tr(\X) \I$ and $\Sinv(\X) = \X - \frac{1}{n+1} \tr(\X) \I$.  As a result
$$
\Y_i =\left [ \frac{4}{n+1} \|\bfz_i\|_2^2 - 2 |z_{i,1}|^2 \right ] \bfz_i \bfz_i^*.\\
$$
The remaining modifications are identical to those in \cite{CSV2011}, and we refer interested readers there for details.  

\section{Numerical  Simulations}
\label{section-numerical-results}

In this section, we show that the optimizationless perspective allows for additional numerical algorithms that are unavailable for PhaseLift directly.  These methods give rise to simpler algorithms with less or no parameter tuning.  We demonstrate successful recovery under Douglas-Rachford and Nesterov algorithms, and we empirically show that the convergence of these algorithms is linear.

\subsection{Optimization Framework}

From the perspective of nonsmooth optimization, PhaseLift and the optimizationless feasibility problem can be viewed as a two-term minimization problem
\begin{align}
\min_\X F(\X) + G(\X).
\end{align}
See, for example, the introduction to \cite{RFP2011}.
Numerical methods based on this splitting include Forward-Backward, ISTA, FISTA, and Douglas-Rachford \cite{RFP2011, CP2010, BT2009, DR1956}.  If $F$ is smooth, it enables a forward step based on a gradient descent.  Nonsmooth terms admit backward steps involving proximal operators. We recall that the proximal operator for a function $G$ is given by
\begin{align}
\text{prox}_G(\X) &= \text{argmin}_\Y \frac{1}{2} \|\X - \Y\|^2 + G(\Y),
\end{align}
and we note that the proximal operator for a convex indicator function is the projector onto the indicated set.

PhaseLift can be put in this two-term form by softly enforcing the data fit.  That gives the minimization problem
\begin{align}
\min_\X \underbrace{\frac{1}{2} \| \A(\X) - \bfb \|^2 + \lambda \ \text{tr}(\X)}_F + \underbrace{\indpos(\X)}_G \label{splitting-phaselift-soft}
\end{align}
 where $\indpos$ is the indicator function that is zero on the positive semidefinite cone and infinite otherwise, and where $\lambda$ is small and positive.  If $\lambda = 0$, \eqref{splitting-phaselift-soft} reduces to the optimizationless feasibility problem.  The smoothness of  $F$ enables methods that are forward on $F$ and backward on $G$.  As a representative of this class of methods, we will consider a Nesterov iteration for our simulations below.
 
 The optimizationless view suggests the splitting
\begin{align}
\min_\X \underbrace{\indaxb(\X)}_F + \underbrace{\indpos(\X)}_G. \label{splitting-optimizationless-hard}
\end{align}
where the data fit term is enforced in a hard manner by the indicator function $\indaxb$.  Because of the lack of smoothness, we can only use the proximal operators for $F$ and $G$.  These operators are projectors on to the affine space $\A(\X) = \bfb$ and $X\matrixgeq 0$, which we denote by 
 $\Paxb$ and $\Ppsd$, respectively.  

The simplest method for \eqref{splitting-optimizationless-hard} is Projection onto Convex Sets (POCS), which is given by the backward-backward iteration $\X_{n+1} = \Ppsd \Paxb \X_n.$  Douglas-Rachford iteration often gives superior performance than POCS, so we consider it as a representative of this class of backward-backward methods.

A strength of the optimizationless perspective is that it does not require as much parameter tuning as PhaseLift.  For example, formulation \eqref{splitting-phaselift-soft} requires a numerical choice for $\lambda$.  Nonzero $\lambda$ will generally change the minimizer.  It is possible to consider a sequence of problems with varying $\lambda$, or perhaps to create a schedule of $\lambda$ within a problem, but these considerations are unnecessary because the optimizationless perspective says we can take $\lambda = 0$.  
In particular,  formulation \eqref{splitting-optimizationless-hard} has the further strength of requiring no parameters at all.

We note that PhaseLift could alternatively give rise to the two-term splitting
\begin{align}
\min \underbrace{\text{tr}(\X)}_F + \underbrace{\indpos(\X) + \indaxb(\X)}_G, \label{splitting-phaselift-hard}
\end{align}
where the data fit term is enforced in a hard manner.  An iterative approach with this splitting would have an inner loop which approximates the proximal operator of $G$.  This inner iteration is equivalent to solving the optimizationless problem.

\subsection{Numerical Results}

First, we present a Douglas-Rachford \cite{CP2010} approach for finding $\X \in \{\X \matrixgeq 0\} \cap \{ \A (\X) \approx \bfb\}$ by the splitting \eqref{splitting-optimizationless-hard}.  It is given by the iteration
\begin{align}
\X_0 &= \Y_0 = 0 \label{dr-first}\\
\Y_n &= \Paxb(2 \X_{n-1} - \Y_{n-1}) - \X_{n-1} + \Y_{n-1}\\
\X_n &= \Ppsd(\Y_n) \label{dr-last}
\end{align}
where $\Ppsd$ is the projector onto the positive semi-definite cone of matrices, and $ \Paxb $ is the projector onto the affine space of solutions to $\A (\X) = \bfb$. In the classically underdetermined case, $m < \frac{(n+1)n}{2}$, we can write 
$$\Paxb \X =  \X - \A^* (\A \A^*)^{-1} \A (\X) + \A^*(\A \A^*)^{-1} \bfb.$$
In the case that $m \geq \frac{(n+1)n}{2}$, we interpret $\Paxb$ as the least squares solution to $\A(\X) = b$.

Second, we present a Nesterov gradient-based method for solving the problem \eqref{splitting-phaselift-soft}.
Letting ${g(\X) = \frac{1}{2} \| \A(\X) - \bfb\|^2 + \lambda \ \text{tr} (\X)}$, we consider the following Nesterov iteration \cite{CESV2011} with constant step size $\alpha$:
\begin{align}
\X_0 &= \Y_0 = 0 \label{nesterov-first}\\
\X_n &= \Ppsd(\Y_{n-1} - \alpha \nabla g(\Y_n-1))\\
\theta_n &= 2 \left( 1 + \sqrt{1 + 4 / \theta^2_{n-1}} \right)^{-1}\\
\beta_n &= \theta_n (\theta^{-1}_{n-1} - 1)\\
\Y_n &= \X_n + \beta_n(\X_n - \X_{n-1}) \label{nesterov-last}
\end{align}

For our simulations, we consider $\bfx_0 \in \R^n$ sampled uniformly at random from the unit sphere.  We take independent, real-valued $\bfz_i \sim \mathcal{N}(0,\I)$, and let the measurements $\bfb$ be subject to additive Gaussian noise corresponding to $\eps = 1/10$.   We let $n$ vary from $5$ to $50$ and let $m$ vary from $10$ to $250$.  We define the recovery error as $\|\X - \X_0\|_2 / \|\X_0\|_2$.  

Figure \ref{fig:phase-transition} shows the average recovery error for the optimizationless problem under the Douglas-Rachford method and the Nesterov method over a range of values of $n$ and $m$.  For the Nesterov method, we consider the optimizationless case of $\lambda = 0$, and we let the step size parameter $\alpha = 2\cdot 10^{-4}$.
 Each pair of values was independently sampled 10 times, and both methods were run for 1000 iterations.   The plot shows that the number of measurements needed for recovery is approximately linear in $n$, significantly lower than the amount for which there are an equal number of measurements as unknowns.  The artifacts around the curve $m = \frac{n(n+1)}{2}$ appear because the problem is critically determined, and the only solution to the noisy $\A(\X) = \bfb$ is not necessarily positive in that case.  

\begin{figure}[h]
	{\large
%
%
\begin{psfrags}%
\psfragscanon%
%
\psfrag{s05}[t][t]{\color[rgb]{0,0,0}\setlength{\tabcolsep}{0pt}\begin{tabular}{c}Number of measurements ($m$)\end{tabular}}%
\psfrag{s07}[][]{\color[rgb]{0,0,0}\setlength{\tabcolsep}{0pt}\begin{tabular}{c}Signal length ($n$)\end{tabular}}%
\psfrag{s08}[][]{\color[rgb]{0,0,0}\setlength{\tabcolsep}{0pt}\begin{tabular}{c}Douglas-Rachford\end{tabular}}%
\psfrag{s09}[l][l]{\color[rgb]{0,0,0}\setlength{\tabcolsep}{0pt}\begin{tabular}{l}{\Large $m = \frac{(n+1)n}{2}$}\end{tabular}}%
\psfrag{s10}[t][t]{\color[rgb]{0,0,0}\setlength{\tabcolsep}{0pt}\begin{tabular}{c}Number of measurements ($m$)\end{tabular}}%
\psfrag{s12}[][]{\color[rgb]{0,0,0}\setlength{\tabcolsep}{0pt}\begin{tabular}{c}Signal length ($n$)\end{tabular}}%
\psfrag{s13}[][]{\color[rgb]{0,0,0}\setlength{\tabcolsep}{0pt}\begin{tabular}{c}Nesterov\end{tabular}}%
\psfrag{s14}[l][l]{\color[rgb]{0,0,0}\setlength{\tabcolsep}{0pt}\begin{tabular}{l}{\Large $m = \frac{(n+1)n}{2}$}\end{tabular}}%
%
\psfrag{x01}[t][t]{50}%
\psfrag{x02}[t][t]{100}%
\psfrag{x03}[t][t]{150}%
\psfrag{x04}[t][t]{200}%
\psfrag{x05}[t][t]{250}%
\psfrag{x06}[t][t]{50}%
\psfrag{x07}[t][t]{100}%
\psfrag{x08}[t][t]{150}%
\psfrag{x09}[t][t]{200}%
\psfrag{x10}[t][t]{250}%
%
\psfrag{v01}[r][r]{50}%
\psfrag{v02}[r][r]{25}%
\psfrag{v03}[r][r]{50}%
\psfrag{v04}[r][r]{25}%
%
\resizebox{16cm}{!}{\includegraphics{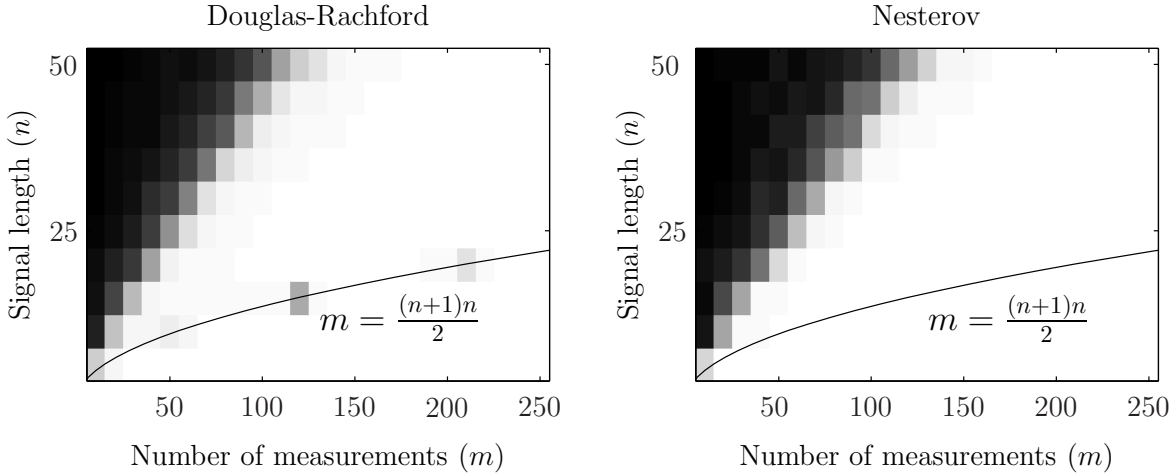}}%
\end{psfrags}%
%

  	}
  \caption{Recovery error for the Douglas-Rachford (DR) and Nesterov methods for the noisy optimizationless matrix recovery problem \eqref{noisy-axb-lifted} as a function of $n$ and $m$. In these plots, $\eps = 10^{-1}$.  For the Nesterov method, $\lambda = 0$. Black  represents an average recovery error of 100\%.   White represents zero average recovery error.  Each block corresponds to the average of  10 independent samples.  The solid curve depicts when there are the same number of measurements as degrees of freedom.  The number of measurements required for recovery appears to be roughly linear, as opposed to quadratic, in $n$.  The DR algorithm has large recovery errors near the curve where the number of measurements equals the number of degrees of freedom.  }
    \label{fig:phase-transition}
\end{figure}

Figure \ref{fig:single-recovery} shows recovery error versus iteration number under the Douglas-Rachford method, the Nesterov method for $\lambda = 0$ and the Nesterov method for $\lambda = 10^{-5}$.  For the Nesterov methods, we let the step size parameter be $\alpha = 10^{-4}$. 
For noisy data, convergence is initially linear until it tapers off around the noise level.  For noiseless data, convergence for feasibility problem is linear under both the Douglas-Rachford and Nesterov methods.  The Nesterov implementation of PhaseLift shows initial linear convergence until it tapers off.  Because any nonzero $\lambda$ allows for some data misfit in exchange for a smaller trace, the computed minimum is not $\X_0$ and the procedure converges to some nearby matrix.  The convergence rates of the Nesterov method could probably be improved by tuning the step-sizes in a more complicated way. Nonetheless, we observe that the Douglas-Rachford method exhibits a favorable convergence rate while requiring no parameter tuning.

We would like to remark that work subsequent to this paper shows that the number of measurements needed by the optimizationless feasibility problem is about the same as the number needed by PhaseLift  \cite{WDM2012}.  That is, the phase transition in Figure \ref{fig:phase-transition} occurs in about the same place for both problems.

\begin{figure}[h]
	{\large

%
%
\begin{psfrags}%
\psfragscanon%
%
\psfrag{s10}[][]{\color[rgb]{0,0,0}\setlength{\tabcolsep}{0pt}\begin{tabular}{c}Douglas-Rachford - Feasibility\end{tabular}}%
\psfrag{s11}[t][t]{\color[rgb]{0,0,0}\setlength{\tabcolsep}{0pt}\begin{tabular}{c}Iterations\end{tabular}}%
\psfrag{s12}[b][b]{\color[rgb]{0,0,0}\setlength{\tabcolsep}{0pt}\begin{tabular}{c}Relative Recovery Error\end{tabular}}%
\psfrag{s13}[][]{\color[rgb]{0,0,0}\setlength{\tabcolsep}{0pt}\begin{tabular}{c}Nesterov - Feasibility\end{tabular}}%
\psfrag{s14}[t][t]{\color[rgb]{0,0,0}\setlength{\tabcolsep}{0pt}\begin{tabular}{c}Iterations\end{tabular}}%
\psfrag{s15}[][]{\color[rgb]{0,0,0}\setlength{\tabcolsep}{0pt}\begin{tabular}{c}Nesterov - PhaseLift\end{tabular}}%
\psfrag{s16}[t][t]{\color[rgb]{0,0,0}\setlength{\tabcolsep}{0pt}\begin{tabular}{c}Iterations\end{tabular}}%
\psfrag{s21}[][]{\color[rgb]{0,0,0}\setlength{\tabcolsep}{0pt}\begin{tabular}{c} \end{tabular}}%
\psfrag{s22}[][]{\color[rgb]{0,0,0}\setlength{\tabcolsep}{0pt}\begin{tabular}{c} \end{tabular}}%
\psfrag{s23}[l][l]{\color[rgb]{0,0,0}Noiseless: $\eps = 0$}%
\psfrag{s24}[l][l]{\color[rgb]{0,0,0}Noisy: $\eps = 10^{-1}$}%
\psfrag{s25}[l][l]{\color[rgb]{0,0,0}Noiseless: $\eps = 0$}%
%
\psfrag{x01}[t][t]{0}%
\psfrag{x02}[t][t]{500}%
\psfrag{x03}[t][t]{1000}%
\psfrag{x04}[t][t]{0}%
\psfrag{x05}[t][t]{500}%
\psfrag{x06}[t][t]{1000}%
\psfrag{x07}[t][t]{0}%
\psfrag{x08}[t][t]{500}%
\psfrag{x09}[t][t]{1000}%
%
\psfrag{v01}[r][r]{$10^{-15}$}%
\psfrag{v02}[r][r]{$10^{-10}$}%
\psfrag{v03}[r][r]{$10^{-5}$}%
\psfrag{v04}[r][r]{$10^{0}$}%
\psfrag{v05}[r][r]{$10^{-15}$}%
\psfrag{v06}[r][r]{$10^{-10}$}%
\psfrag{v07}[r][r]{$10^{-5}$}%
\psfrag{v08}[r][r]{$10^{0}$}%
\psfrag{v09}[r][r]{$10^{-15}$}%
\psfrag{v10}[r][r]{$10^{-10}$}%
\psfrag{v11}[r][r]{$10^{-5}$}%
\psfrag{v12}[r][r]{$10^{0}$}%
%
\resizebox{16cm}{!}{\includegraphics{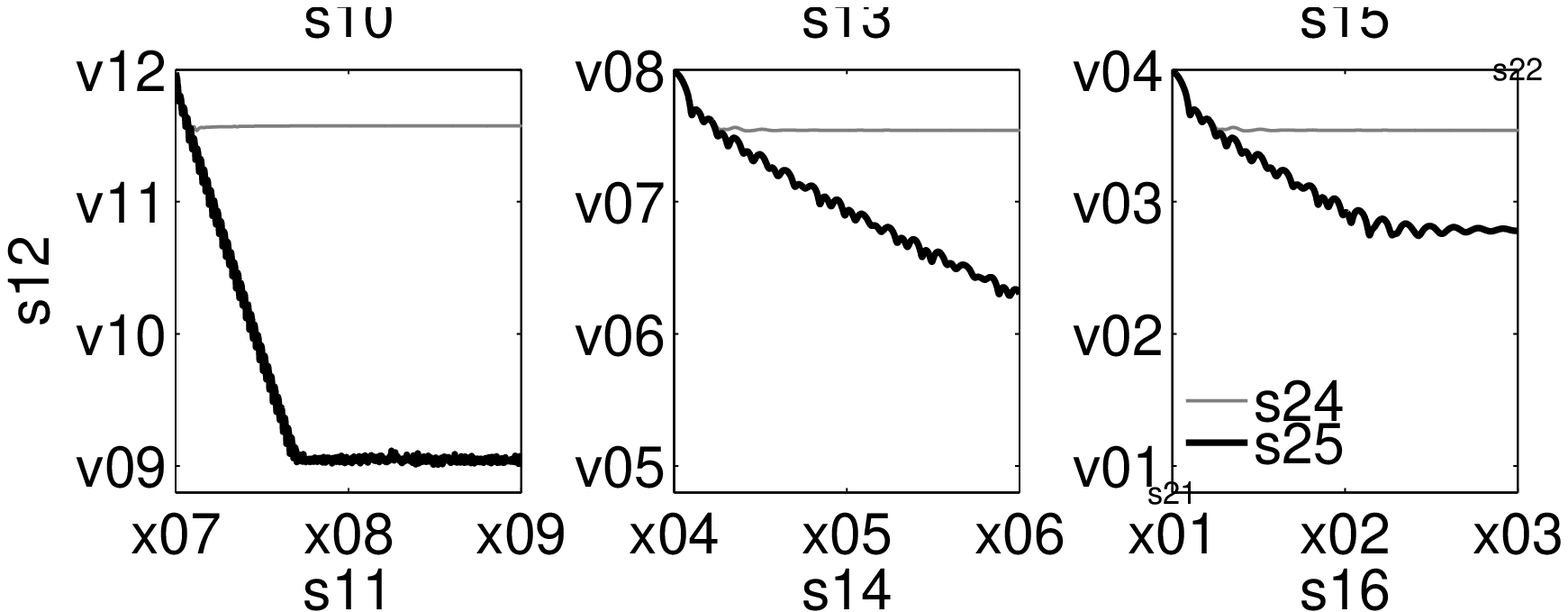}}%
\end{psfrags}%
%

  	}
  \caption{The relative error versus iteration number for the noiseless and noisy matrix recovery problems,
  \eqref{exact-axb-lifted} and \eqref{noisy-axb-lifted}, under the Douglas-Rachford and Nesterov methods.  The left panel corresponds to the iteration \eqref{dr-first}--\eqref{dr-last}.  The middle panel corresponds to the iteration \eqref{nesterov-first}--\eqref{nesterov-last} in the optimizationless case, where $\lambda =0$.  The right panel corresponds to the iteration \eqref{nesterov-first}--\eqref{nesterov-last} for $\lambda=10^{-5}$. As expected, convergence is linear until it saturates due to noise.  In the Nesterov implementation of PhaseLift, convergence is linear until it saturates because the nonzero $\lambda$ makes the solution to the problem different than $\X_0$.}
  \label{fig:single-recovery}
\end{figure}


\begin{thebibliography}{99}


\bibitem{BBCE2009} R. Balan, B. Bodmann, P.G. Casazza, D. Edidin.
\newblock Painless reconstruction from magnitudes of frame vectors,
\newblock {\em J. Fourier Anal. Appl.}, 15(4), 488-501, 2009.

\bibitem{BCE2006} R. Balan, P. Casazza, and D. Edidin.
\newblock On signal reconstruction without phase.
\newblock {\em Appl. Comput. Harmon. Anal.}, 20(3), 345-356, 2006

\bibitem{BSS2012}
A. S. Bandeira, A. Singer, D. A. Spielman,
\newblock A Cheeger Inequality for the Graph Connection Laplacian.
\newblock {To appear in  {\em SIAM Journal on Matrix Analysis and Applications}}. 

\bibitem{BT2009} A. Beck, M. Teboulle,
\newblock A Fast Iterative Shrinkage-Thresholding Algorithm for Linear Inverse Problems
\newblock{ \em SIAM J. Imaging Sci.} 2(1):183-202, 2009.

\bibitem{CESV2011} E. J. Candes, Y. C. Eldar, T. Strohmer, V. Voroninski.
\newblock Phase Retrieval via Matrix Completion.
\newblock {\em SIAM J. on Imaging Sciences} 6(1), 199--225, 2011.

\bibitem{CL2012} E. J. Candes, X. Li.
\newblock Solving Quadratic Equations via PhaseLift when There Are About As Many Equations As Unknowns.
\newblock To appear in {\em Found. Comput. Math.}, 2012.

\bibitem{CSV2011} E. J. Candes, T. Strohmer, V. Voroninski.
\newblock PhaseLift: Exact and Stable Signal Recovery from Magnitude Measurements via Convex Programming.
\newblock {\em Commun. Pure Appl. Math.} 66(8), 1241--1274, 2013.

\bibitem{CMP2011} A. Chai, M. Moscoso, and G. Papanicolaou. 
\newblock Array imaging using intensity-only measurements. 
\newblock {\em Inverse Probl.}, 27(1), 015005, 2011.

\bibitem{CP2010} P. Combettes, J. Pesquet.
\newblock Proximal Splitting Methods in Signal Processing.  In: {\em Fixed-Point Algorithms for Inverse Problems in Science and Engineering}. New York: Springer-Verlag, 2010.

\bibitem{DR1956} J. Douglas, H. H. Rachford.
\newblock On the numerical solution of heat conduction problems in two and three
space variables.
\newblock {\em Trans. AMS}, 82(2), 421-439, 1956.

\bibitem{F1982}
J.R. Fienup. 
\newblock Phase retrieval algorithms: a comparison.
\newblock {\em Applied Optics}, 21(15), 2758Ð2769, 1982.

\bibitem{GS1972} R. Gerchberg,  W. Saxton. 
\newblock A practical algorithm for the determination of phase from image and diffraction plane
pictures. 
\newblock {\em Optik}, 35, 237Ð246, 1972.

\bibitem{GW1995}
M.X. Goemans,  D.P. Williamson. 
\newblock Improved approximation algorithms for maximum cut and satisfiability problems using semi-definite programming. 
\newblock {\em J. ACM}, 42, 1115Ð1145, 1995

\bibitem{GL1984}
D. Griffin, J. Lim. 
\newblock Signal estimation from modified short-time fourier transform. 
\newblock {\em Acoustics, Speech and Signal Processing, IEEE Transactions on}, 32(2), 236Ð243, 1984.

\bibitem{H1993} R.W. Harrison. 
\newblock Phase problem in crystallography.
\newblock {\em J. Opt. Soc. Am. A}, 10(5), 1045Ð1055, 1993.

\bibitem{MISE2008} J. Miao, T. Ishikawa, Q. Shen, T. Earnest. 
\newblock Extending X-Ray crystallography to allow the imaging of non- crystalline materials, cells and single protein complexes. 
\newblock {\em Annu. Rev. Phys. Chem.}, 59, 387Ð410, 2008.

\bibitem{M1990} R.P. Millane. 
\newblock Phase retrieval in crystallography and optics.
\newblock {\em J. Opt. Soc. Am. A}, 7:394Ð411, 1990.

\bibitem{RFP2011}
H. Raguet, J. M. Fadili, G. Peyre.
\newblock A Generalized Forward-Backward Splitting.
\newblock {\em SIAM J. Imaging Sci}, 6(3), 1199-1226, 2013.


\bibitem{RFP2007}
B. Recht, M. Fazel. P. Parrilo.
\newblock Guaranteed minimum-rank solutions of linear matrix equations
via nuclear norm minimization
\newblock {\em SIAM Rev.}, 52(3), 471Ð501, 2010.

\bibitem{Singer2011}
A. Singer.
\newblock Angular synchronization by eigenvectors and semidefinite programming
\newblock {\em Appl. Comput. Harmon. Anal.} 30(1), 20--36, 2011.

\bibitem{Ver2010} R. Vershynin.  Introduction to the non-asymptotic analysis of random matrices.  In Y. C.  Eldar and G. Kutyniok, editors, {\em Compressed Sensing: Theory and Applications}.  Camb. Univ Press 2010. 

\bibitem{WDM2012}
I. Waldspurger, A. d'Aspremont, S. Mallat,
\newblock Phase recovery, Maxcut, and complex semi-definite programming
\newblock {\em arXiv Preprint} 1206.0102, 2012

\bibitem{W1963} A. Walther. 
\newblock The question of phase retrieval in optics. 
\newblock \em{Opt. Acta}, 10:41Ð49, 1963.


\end{thebibliography}
\end{document}